\newcommand{\arxiv}[1]{#1} \newcommand{\adv}[1]{}
\theoremstyle{definition}
\newtheorem{theorem}{Theorem}
\newtheorem{lemma}{Lemma}
\newtheorem{prop}{Proposition}
\DeclareMathOperator{\sh}{sh}
\DeclareMathOperator{\sgn}{sgn}
\newcommand{\bu}[1]{\mathbf{\underline{#1}}}
\newcommand{\bY}{\begin{Young}}
\newcommand{\eY}{\end{Young}}
\title{ Tableaux and plane partitions of truncated shapes}
\author{ Greta Panova }
\thanks{The majority of this work was done in 2010 when the author was at Harvard University, Department of Mathematics. \\
Currently supported by a Simons Postdoctoral Fellowship at UCLA.} 
\curraddr{ Mathematics Department, University of California, Los Angeles, 520 Portola Plaza, Los Angeles, CA 90095.  } 
\email{panova@math.ucla.edu}
\keywords{truncated shapes, Young tableaux, plane partitions, hook-length formulas, Schur functions }
\date{ Received: July 15, 2011. Accepted: May 29, 2012}
\subjclass[2010]{Primary: 05E05, 05A15, Secondary: 05E10 }
\begin{document}
\maketitle
\begin{abstract}
We consider a new kind of straight and shifted plane partitions/Young tableaux --- ones whose diagrams are no longer of partition shape, but rather Young diagrams with boxes erased from their upper right ends. We find formulas for the number of standard tableaux in certain cases, namely a shifted staircase without the box in its upper right corner, i.e. truncated by a box, a rectangle truncated by a staircase  and a rectangle truncated by a square minus a box. The proofs involve finding the generating function of the corresponding plane partitions using interpretations and formulas for sums of restricted Schur functions and their specializations. The number of standard tableaux is then found as a certain limit of this function. 
\end{abstract}

\section{Introduction}

A central topic in the study of tableaux and plane partition are their enumerative properties. The number of standard Young tableaux (SYT) is given by the famous hook-length formula of Frame, Robinson and Thrall, a similar formula exists for shifted standard tableaux. However, most tableaux and posets in general do not have such nice enumerative properties, for example their main generalizations as skew tableaux are not counted by any product type formulas. 

In this paper we find product formulas for special cases of a new type of tableaux and plane partitions, ones whose diagrams are not straight or shifted Young diagrams of integer partitions. The diagrams in question are obtained by removing boxes from the north-east corners of a straight or shifted Young diagram and we say that the shape has been truncated by the shape of the boxes removed. We discover formulas for the number of tableaux of specific truncated shapes: shifted staircase truncated by one box in Theorem~\ref{thm1}, rectangle truncated by a staircase shape Theorem ~\ref{thm2} and  rectangle truncated by a square minus a box in Theorem~\ref{thm3}; these are illustrated as
\ytableausetup{boxsize=1ex}
$$\ydiagram{6,1+6,2+5,3+4,4+3,5+2,6+1}\;, \qquad \ydiagram{5,6,7,8,9,9,9} \; , \qquad \ydiagram{5,5,5,6,9,9,9}\; .$$
\noindent
The proofs rely on several steps of interpretations.   
 Plane partitions of truncated shapes are interpreted as (tuples of) SSYTs, which translates the problem into specializations of sums of restricted Schur functions. The number of standard tableaux is found as a polytope volume and then a certain limit of these specializations (generating function for the corresponding plane partitions). The computations involve among others complex integration and the Robinson-Schensted-Knuth correspondence.

The consideration of these objects started after R.~Adin and Y.~Roichman asked for a formula for the number of linear extensions of the poset of triangle-free triangulations, which are equivalent to standard tableaux of shifted straircase shape with upper right corner box removed, \cite{Roich}. We find and prove the formula in question, namely 
\begin{theorem}\label{thm1}
The number of shifted standard tableaux of shape $\delta_n \setminus \delta_1$ is equal to
$$g_n \frac{C_n C_{n-2}}{2 C_{2n-3}},$$
where $g_n=\frac{\binom{n+1}{2}!}{\prod_{0\leq i<j \leq n} (i+j)}$ is the number of shifted staircase tableaux of shape $(n,n-1,\ldots,1)$ and $C_m=\frac{1}{m+1}\binom{2m}{m}$ is the $m-$th Catalan number.
\end{theorem}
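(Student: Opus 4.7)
The plan is to follow the general strategy sketched in the abstract. Let $N=\binom{n+1}{2}-1$. For any finite poset $P$ with $|P|=N$, the number of linear extensions satisfies
\[
e(P)=\lim_{q\to 1}(q;q)_N\,F_P(q),
\]
where $F_P(q)=\sum_\pi q^{|\pi|}$ is the generating function for order-preserving maps $P\to\mathbb{Z}_{\ge 0}$. Applied to $P=\delta_n\setminus\delta_1$, this reduces Theorem~\ref{thm1} to an explicit evaluation of the generating function $F(q)$ of shifted reverse plane partitions of the truncated staircase.

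To compute $F(q)$, I would slice a plane partition $\pi$ into its $n$ rows $\pi^{(1)},\dots,\pi^{(n)}$. Because the corner box has been removed, the first row has length $n-1$, while the remaining rows fit the usual shifted staircase. The weak row increase and strict diagonal increase of the plane partition translate to an interlacing chain of partitions, which under the standard bijection identifies such $\pi$ with a tuple of semistandard Young tableaux. Summing $q^{|\pi|}$ then rewrites $F(q)$ as a sum of products of Schur polynomials specialized at $q,q^2,q^3,\dots$, subject to a restriction of the form $\lambda_1\le n-1$ on one of the summation indices, inherited from the absent corner box. Without that restriction the sum would collapse by the Cauchy identity into a MacMahon-type product, recovering the standard count for the full staircase.

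To enforce the constraint $\lambda_1\le n-1$ I plan to introduce a complex contour integral extracting the required truncation as a coefficient in an auxiliary variable $z$; after this device the unrestricted Schur sum collapses by Cauchy into a product of factors $(1-q^{a+b})^{-1}$, and one is left with a single-variable residue calculation in $z$. Evaluating the residues, multiplying by $(q;q)_N$, and letting $q\to 1$ should produce $g_n$ together with a ratio of factorials that simplifies to the Catalan quotient $C_nC_{n-2}/(2C_{2n-3})$. The principal obstacle is this residue computation: the truncation breaks the symmetry that lets the full staircase be handled by the shifted hook-length formula or by Pfaffian methods, so the challenge is to massage the closed form enough for the $q\to 1$ limit to be finite and for the Catalan structure to become manifest, rather than hidden inside an opaque rational expression.
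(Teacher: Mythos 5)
Your overall strategy coincides with the paper's: reduce the count to $\lim_{q\to1}(q;q)_N F_D(q)$ (Proposition~\ref{limformula}), convert truncated plane partitions into a restricted sum of specialized Schur functions, enforce the restriction by a contour-integral coefficient extraction, and take the $q\to1$ limit. But as written there are two genuine gaps. First, the restriction you anticipate --- ``$\lambda_1\le n-1$ on one of the summation indices, removable by a single-variable contour integral'' --- is not what the natural slicing produces. Slicing $\delta_n\setminus\delta_1$ along its diagonals gives a chain $\mu=\lambda^{n-1}\subset\cdots\subset\lambda^{1}=\lambda$, i.e.\ a skew SSYT of shape $\lambda/\mu$ with entries in $\{1,\dots,n-2\}$, and the truncation manifests as a bound on the number of rows of the \emph{inner} shape: $\ell(\mu)\le k+1=2$, weighted by $t^{|\mu|}$ with $t=q^{n-1}$. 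A two-row condition cannot be extracted with one auxiliary variable; the paper needs $s_\mu(y_1,y_2)$, both Cauchy identities, and an integral against $A(y^{-1})$ over a torus in $\mathbb{C}^2$. (Your row-by-row slicing also does not obviously yield an interlacing chain of the SSYT type: for a shifted shape the column inequalities relate $\pi^{(i+1)}_j$ to $\pi^{(i)}_{j+1}$, which is a containment after a shift, not a horizontal-strip condition.) So the ``single-variable residue calculation'' would not close as described.

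Second, and more seriously, the proposal stops exactly where the real work begins. After the two residues are taken one is left with $\sum_{i\ge0}(1+u_i)^2u_i^{n-3}\prod_{j\ne i}(u_i-u_j)^{-1}\prod_{j}(1-u_iu_j)^{-1}$ with $u_0=t$, $u_i=tx_i$; the paper then needs the identity $\sum_i u_i^{r}\prod_{j\ne i}(u_i-u_j)^{-1}=h_{r-p+1}(u)$ to collapse this to $c_0(u)+c_1(u)$ with $c_s=\sum_l h_l h_{l+s}$, and a separate lattice-path/inclusion--exclusion argument showing that $(1-q)^{2n-3}c_s(q^{n-1},\dots,q^{2n-3})\to\binom{2n-4}{n-2}\prod_{i=2}^{2n-2}(2n-4+i)^{-1}$, which is precisely where the Catalan factors come from. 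You explicitly defer this step (``the principal obstacle is this residue computation''), so the argument as it stands sets up the correct framework but does not establish the theorem.
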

We also prove formulas for the cases of rectangles truncated by a staircase.
\begin{theorem}\label{thm2}
The number of standard tableaux of truncated straight shape $\smash{\underbrace{(n,n,\ldots,n)}_m}\setminus \delta_k$ (assume $n\leq m$), is
\begin{align}
\binom{mn-\binom{k+1}{2}}{m(n-k-1)} &f_{(n-k-1)^m} g_{(m,m-1,\ldots,m-k)}\frac{E_1(k+1,m,n-k-1)}{E_1(k+1,m,0)}, 
\end{align}
where $$ E_1(r,p,s)=\begin{cases} \prod_{r<l<2p-r+2}  \frac{1}{(l+2s)^{r/2}} \prod_{2\leq l\leq r}
\frac{1}{((l+2s)(2p-l+2+2s))^{\lfloor l/2 \rfloor}}, \; &r \text{ even}, \\  
\frac{((r-1)/2+s)!}{(p-(r-1)/2+s)!}E_1(r-1,p,s) , \; &r \text{ odd}.\end{cases}$$
\end{theorem}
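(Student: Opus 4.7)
The plan is to compute the number of SYT of $\lambda = (n^m) \setminus \delta_k$ by extracting the leading behaviour of the generating function for plane partitions of this truncated shape. Let $N = mn - \binom{k+1}{2} = |\lambda|$ and write $\Phi_\lambda(q) := \sum_\pi q^{|\pi|}$, the sum over all plane partitions $\pi$ of shape $\lambda$. Then
$$f^\lambda = N!\,\lim_{q \to 1^-}(1-q)^N \Phi_\lambda(q),$$
so $f^\lambda/N!$ coincides with the volume of the order polytope of the poset underlying $\lambda$. The shape $\lambda$ admits a natural decomposition along the vertical line between columns $n-k-1$ and $n-k$: a rectangle $R = ((n-k-1)^m)$ on the left, and a right piece $S$ whose columns have heights $m, m-1, \ldots, m-k$. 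Conjugating $S$ (swapping rows and columns) produces exactly the shifted staircase of shape $(m, m-1, \ldots, m-k)$. This already foreshadows the three combinatorial factors in the formula: $f_{(n-k-1)^m}$, $g_{(m,m-1,\ldots,m-k)}$, and the binomial $\binom{N}{m(n-k-1)}$.

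To evaluate $\Phi_\lambda(q)$, I would parameterize a plane partition by the shape $\nu$ of its interface column (column $n-k$). Column monotonicity decouples the two pieces subject to a dominance condition on $\nu$, so
$$\Phi_\lambda(q) = \sum_\nu A_\nu(q)\, B_\nu(q),$$
where $A_\nu$ (resp.\ $B_\nu$) enumerates plane partitions of $R$ (resp.\ the shifted $S$) whose boundary column is prescribed by $\nu$. Via the bijections between bounded plane partitions and semistandard (resp.\ shifted) tableaux, $A_\nu$ and $B_\nu$ become principal specializations of the Schur function $s_\nu$ and the shifted Schur-$P$ function $P_\nu$, turning $\Phi_\lambda(q)$ into a restricted Schur-Littlewood type series. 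I would evaluate this sum either via a Cauchy-Littlewood identity of the form $\sum_\nu s_\nu(x) P_\nu(y) = \prod_{i,j}(1+x_iy_j)/(1-x_iy_j)$ adapted to the finite alphabets at hand, or via the Lindstr\"om-Gessel-Viennot lemma on non-intersecting lattice paths combined with a contour-integral extraction --- in line with the abstract's mention of complex integration and RSK.

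Passing to the $q \to 1$ limit then splits the answer into contributions: the hook-content formula turns the rectangular piece into $f_{(n-k-1)^m}$, the shifted hook-length formula turns the shifted piece into $g_{(m,m-1,\ldots,m-k)}$, and the shuffle of integer labels between the two pieces supplies the binomial $\binom{N}{m(n-k-1)}$. The remaining coupling contribution, from the sum over $\nu$, must collapse to the ratio $E_1(k+1,m,n-k-1)/E_1(k+1,m,0)$. The main technical obstacle is precisely this last step: although the sum has the shape of a Cauchy-Littlewood series, the restricted specializations mean that classical identities do not apply directly, so I expect to reduce the sum to a Pfaffian or a contour integral and then carefully match the resulting product with the explicit even/odd case definition of $E_1(r,p,s)$. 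Tracking the $q\to 1$ asymptotics cleanly enough to isolate the leading term --- without spurious cancellations --- is the subtlest part.
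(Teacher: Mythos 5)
Your structural intuition is sound---the answer does split into a rectangle contribution $f_{(n-k-1)^m}$, a shifted-staircase contribution $g_{(m,\ldots,m-k)}$, a shuffle binomial, and a correction ratio---and your overall frame (number of SYT as $N!\lim_{q\to1}(1-q)^N F_D(q)$) is exactly Proposition~\ref{limformula}. Your vertical cut differs from the paper's, which cuts along the main diagonal (Proposition~\ref{sumofentries}) and lands, after a Cauchy-identity manipulation (Proposition~\ref{d-formula}), on $F_D(q)=\prod_{i,j}(1-x_iz_j)^{-1}\sum_{l(\nu)\le k+1}s_\nu(zt)$; both routes arrive at a rectangle factor times a length-restricted Schur sum. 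But your proposal has a genuine gap at exactly the point you label ``the main technical obstacle'': you give no mechanism for evaluating that coupling term, and the identity you propose for it, $\sum_\nu s_\nu(x)P_\nu(y)=\prod_{i,j}(1+x_iy_j)/(1-x_iy_j)$, is not a correct identity---that product is the Cauchy identity for the pair $(P_\nu,Q_\nu)$, and the mixed Schur/Schur-$P$ sum does not factor. Since the whole content of the theorem is the correction ratio $E_1(k+1,m,n-k-1)/E_1(k+1,m,0)$ (without it, ``rectangle count times shifted count times shuffle binomial'' is simply false), the plan stalls where the work actually is.

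What is missing is the paper's key lemma: the evaluation of $\lim_{q\to1}(1-q)^{N}\sum_{l(\nu)\le r}s_\nu(q^{1+s},\ldots,q^{p+s})$. The paper does this by applying RSK to identify SSYT with at most $r$ rows with symmetric $p\times p$ matrices having no decreasing subsequence of length $r+1$ (Schensted's theorem), reducing the sum to a sum over $0$--$1$ ``allowed configurations'' $B$, observing that only configurations with the maximal number $N$ of nonzero entries survive the $(1-q)^N$ limit and that each such configuration contributes the identical factor $E_1(r,p,s)$, and then determining the number $M$ of maximal configurations indirectly: setting $s=0$ makes the sum the generating function for shifted plane partitions of shape $(p,\ldots,p-r+1)$, whose limit is the known $g_{(p,\ldots,p-r+1)}/N!$, so $M=g_{(p,\ldots,p-r+1)}/(N!\,E_1(r,p,0))$. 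That normalization argument is precisely what produces the $E_1$ ratio in the theorem, and nothing in your proposal---neither the misquoted Cauchy--Littlewood identity nor the unspecified Pfaffian/contour-integral fallback---substitutes for it.
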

\begin{theorem}\label{thm3}
The number of standard truncated tableaux of shape $n^m\setminus (k^{k-1},k-1)$ is
$$\binom{nm-k^2+1}{m(n-k)} \binom{k(m+n-2k)+1}{kn-k^2}^{-1}f_{(m^{n-k})}
f_{((m-k)^{k})}.$$
\end{theorem}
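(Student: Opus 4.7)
The plan is to recover the SYT count from the leading asymptotics of the plane-partition generating function on the truncated shape, via the order-polytope identity $|\mathrm{SYT}| = N!\cdot\mathrm{vol}(\mathcal{O})$ with $N = nm - k^2 + 1$. Equivalently, if $\mathrm{PP}_t$ counts plane partitions of the shape with entries in $[0,t]$, then $|\mathrm{SYT}| = N!\lim_{t\to\infty} t^{-N}\mathrm{PP}_t$, so the task is to compute $\mathrm{PP}_t$ in sufficiently closed form.

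First I would decompose the truncated shape into three interacting regions: the left rectangle $L = m\times(n-k)$, the bottom-right rectangle $R = (m-k)\times k$ occupying rows $k{+}1,\dots,m$ and columns $n{-}k{+}1,\dots,n$, and the single bridge cell $e=(k,n{-}k{+}1)$. In the underlying poset, the top-left $k\times(n-k)$ subblock of $L$ sits entirely below $e$, and $e$ sits entirely below $R$, while the bottom-left $(m-k)\times(n-k)$ subblock of $L$ is incomparable to $e$ and interacts with $R$ only via $m-k$ row-wise boundary inequalities between columns $n{-}k$ and $n{-}k{+}1$.

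A plane partition on the truncated shape is then pinned down by its restrictions to $L$ and $R\cup\{e\}$ together with those boundary inequalities. Each restriction, via the classical straightening bijection between rectangular plane partitions and semistandard Young tableaux, has generating function given by a principally specialized Schur function of the corresponding rectangular shape. Summing over the common boundary-column profile produces $\mathrm{PP}_t$ as a sum $\sum_\mu s_\mu^{(L)}\cdot s_\mu^{(R\cup e)}$ indexed by intermediate partitions $\mu$; the contribution of the single extra cell $e$ is isolated by a single-variable contour integral, and the Cauchy identity (or a Jacobi--Trudi determinant applied through RSK) collapses the remaining sum into an explicit product.

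Finally I would let $t\to\infty$: the rectangular Schur specializations degenerate, via the hook-content formula, into $f_{(m^{n-k})}$ and $f_{((m-k)^k)}$ multiplied by explicit polynomial factors in $t$; the binomial $\binom{nm-k^2+1}{m(n-k)}$ emerges from the interleaving of the $N$ time labels between $L$ and $R\cup\{e\}$; and the inverse binomial $\binom{k(m+n-2k)+1}{kn-k^2}^{-1}$ captures the wedge constraint that the label at $e$ is squeezed between the $k(n-k)$ labels of $L$'s top-left subblock and the $k(m-k)$ labels of $R$. The main obstacle will be handling the $m-k$ row-wise boundary couplings between $L$'s bottom-left subblock and $R$'s left column: I expect these to unzip after the RSK step, because the bottom-left subblock participates symmetrically in the boundary data and its contribution factors cleanly in the generating function, so that only the constraint at $e$ survives nontrivially in the limit and produces the inverse binomial factor via the contour integral. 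Making this reduction precise is the heart of the computation.
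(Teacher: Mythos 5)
Your overall framework --- expressing the number of SYT as $N!$ times the leading asymptotics of the plane-partition generating function and evaluating that generating function through specialized Schur functions and a Cauchy-type identity --- is exactly the paper's framework. The gap is in the middle: the decomposition you propose does not deliver the Schur-function structure you need. Cutting between columns $n-k$ and $n-k+1$ leaves $m-k$ row-wise inequalities coupling the last column of $L$ to the first column of $R$, and a fixed boundary \emph{column} does not index a factorization $\mathrm{PP}_t=\sum_\mu s_\mu^{(L)}s_\mu^{(R\cup e)}$: the transfer structure that yields Schur functions for plane partitions runs along \emph{diagonals} (consecutive diagonals differ by horizontal strips, which is what makes the map $\phi$ of the paper produce SSYT), not along columns. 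You acknowledge that making the couplings ``unzip'' is the heart of the computation, but that is precisely the step for which you have no argument. The paper's key idea, which you are missing, is different: complete $D$ to the staircase-truncated rectangle $n^m\setminus\delta_{k-1}$ by filling each of the $\binom{k+1}{2}-1$ added cells with the value $p=T[k,n-k+1]$ of the corner cell. The row and column inequalities make this a bijection onto plane partitions of $n^m\setminus\delta_{k-1}$ whose relevant diagonal is the constant rectangle $(p^k)$, so the generating function becomes $\sum_p\sum_\lambda s_\lambda(z)\,s_{\lambda/(p^k)}(x)\,t^p$, which collapses via $\sum_\lambda s_\lambda(z)s_{\lambda/\mu}(x)=\prod_{i,j}(1-z_ix_j)^{-1}s_\mu(z)$ to a product times the single-parameter sum $\sum_p s_{(p^k)}(z)t^p$. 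No gluing across a column ever occurs.

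A second, independent gap: both binomial factors in the statement are read off from the answer rather than derived. In particular the factor $\binom{k(m+n-2k)+1}{kn-k^2}^{-1}$ requires an actual computation: the paper writes $s_{(p^k)}(1,q,\dots,q^{m-1})$ by the bialternant formula as an explicit polynomial in $q^p$, sums the geometric series in $p$, and evaluates the $q\to 1$ limit of $(1-q)^{mk-k^2+1}\sum_p s_{(p^k)}(1,\dots,q^{m-1})q^{sp}$ as the Beta integral $\int_0^1 v^{\binom{k}{2}+s-1}(v-1)^{k(m-k)}\,dv$ with $s=k(n-k)-\binom{k+1}{2}+1$; combined with the hook-length products this produces the inverse binomial. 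Your ``interleaving of time labels'' heuristic for $\binom{nm-k^2+1}{m(n-k)}$ would be valid only if $L$ and $R\cup\{e\}$ were order-incomparable, which they are not. If you replace the column cut by the completion argument and then carry out the $\sum_p s_{(p^k)}$ evaluation, your plan becomes the paper's proof.
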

We will also exhibit connections with boxed plane partitions, as the generating function we use is the same as the volume generating function for boxed plane partitions.

We should note that numerical data suggests there are almost no other truncated shapes with product-type formulas. We see this because the number of standard tableaux of a shape of size $n$ has very large prime factors (e.g. on the order of $10^6$) even for small (below 50) values of $n$. For example, there does not seem to be a product type formula for the immediate generalizations of the shapes above, e.g. shifted truncated shape $\delta_n\setminus \delta_k$ for $k>1$, or rectangles truncated by squares minus a staircase $n^m \setminus (k^r,k-1,\ldots,r+1,r)$. The lack of hypothetical general formulas prevents us from applying the usual, generally induction based, techniques used for proving the hook-length formulas enumerating standard Young tableaux.  

It might be interesting to investigate what other properties of SYTs, besides product-form formulas, have analogues in tableaux of truncated shapes. The presence of such formulas might suggest connections to representation theory and/or combinatorial properties like RSK. While almost all other truncated shapes lack product-type formulas, it is conceivable that other closed form formulas, e.g. determinantal, could exist. 

 In \cite{Roich2} Adin, King and Roichman have independently found a formula for the case of shifted staircase truncated by one box and rectangle truncated by a square minus a box by methods different from the methods developed here. 

\section{Background and definitions}
\ytableausetup{boxsize=2ex}
We will review some basic facts and definitions from the classical theory of symmetric functions and Young tableaux which will be used later in this paper. The full description of these facts, as well as proofs and much more can be found in \cite{EC2} and \cite{Mac}.

An integer \textbf{partition} $\lambda$ of $n$ is a weakly decreasing sequence of nonnegative integers $\lambda=(\lambda_1,\lambda_2,\ldots,\lambda_l)$ which sum up to $n$, i.e. $\lambda_1+\cdots+\lambda_l=n$. The length of $\lambda$, $l(\lambda)$, denotes the number of parts of $\lambda$, in this case it is $l$. Partitions are depicted graphically as \textbf{Young diagrams}, a left justified array of $n=|\lambda|$boxes, such that row $i$ counted from the top has $\lambda_i$ boxes. For example, if $\lambda=(5,3,2)$ then $l(\lambda)=3$ and it's Young diagram is \ydiagram{5,3,2}. 
We will mean the Young diagram of $\lambda$ when we refer to $\lambda$ as a shape. A \textbf{skew} partition (and thus shape) $\lambda/\mu$ is the collection of boxes which belong to $\lambda$ but not $\mu$ when drawn with coinciding top left corners. E.g. the diagram of $(5,4,3,1)/(3,2)$ is $\ydiagram{3+2,2+2,3,1}$.

A \textbf{semi-standard Young tableaux (SSYT)} of shape $\lambda$ and type $\alpha=(\alpha_1,\alpha_2,\ldots)$, where $\alpha_1+\alpha_2+\cdots=|\lambda|$  is a filling of the boxes of the Young diagram of $\lambda$ with integers, such that the numbers weakly increase along each row and strictly decrease down each column, and there are exactly $\alpha_i$ numbers equal to $i$. A \textbf{standard Young tableaux (SYT)} is an SSYT of type $(1^n)=(\underbrace{1,\ldots,1}_n)$. A plane partition of shape $\lambda$ is a filling with nonnegative integers of the diagram of $\lambda$, such that the numbers decrease weakly along rows and down columns. The same definitions carry over verbatim when the shape is skew, i.e. $\lambda/\mu$.

\ytableausetup{boxsize=3ex}
\begin{tabular}{p{2in}p{2in}p{2in}}
\ytableaushort{112335,2344,456} &  \ytableaushort{123579,48{10}{12},6{11}{13}} & \ytableaushort{554331,5442,431} \\
SSYT of type $(2,2,3,3,2,1)$  & SYT of shape $(6,4,3)$& plane partition \\
\end{tabular}

The \textbf{Schur function} $s_{\lambda/\mu}(x_1,\ldots,x_l)$ is defined as
$$s_{\lambda/\mu}(x_1,\ldots,x_l) = \sum_{T: \sh(T)=\lambda/\mu} x^T,$$
where the sum is over all SSYTs $T$ of shape $\lambda/\mu$ and entries $1,2,\ldots,l$ and $x^T=\prod x_i^{\alpha_i}$ if the type of $T$ is $\alpha$. Another definition for Schur functions of shape $\lambda$ is Schur's determinantal formula:
$$s_{\lambda}(x_1,\ldots,x_k) = \frac{ \det[ x_i^{\lambda_j+l-j}]_{i,j=1}^l}{\det[ x_i^{l-j}]_{i,j=1}^l} = \frac{a_{\lambda+\delta_l}(x_1,\ldots,x_l)}{a_{\delta_{l-1}}(x_1,\ldots,x_l)},$$
where $a_{\beta}(x_1,\ldots,x_l) = \det[ x_i^{\beta_j}]_{i,j=1}^l$ and $\delta_l = (l,l-1,\ldots,1)$. 
The Schur functions belong to the ring of symmetric functions, i.e. functions invariant under permutations of the variables. Other symmetric functions we will refer to in this paper are the \textbf{homogenous symmetric functions} $h_{\lambda}(x_1,\ldots,x_l)$ defined as
$$h_p = \sum_{1\leq i_1 \leq i_2 \cdots \leq i_l \leq l} x_{i_1}x_{i_2}\cdots x_{i_l}, \qquad h_{\lambda}=\prod_{i=1}^{l(\lambda)} h_{\lambda_i}.$$

These functions satisfy a lot of remarkable identities, some of which we will mention and use later here. 
We have that 
\begin{equation}\label{sum_homog}\sum_{l \geq 0} h_l(x_1,\ldots,x_k)t^l = \prod_{i=1}^k \frac{1}{1-tx_i}.\end{equation}
The Schur functions satisfy Cauchy-type identities, namely
\begin{equation}\label{cauchy}
\sum_{\lambda}s_{\lambda}(x_1,\ldots)s_{\lambda}(y_1,\ldots)=\prod_{i,j}\frac{1}{1-x_iy_j}\quad \text{and} \quad \sum_{\lambda} s_{\lambda}(x_1,\ldots)=\prod_{i<j} \frac{1}{1-x_ix_j}\prod_i \frac{1}{1-x_i}.
\end{equation}
While these identities can proven algebraically, the more elegant proof is via a bijection between pairs of SSYTs of same shape $(P,Q)$, $\sh(P)=\sh(Q)$ and matrices $A=[a_{ij}]$ of nonnegative integer entries --- the \textbf{Robinson-Schensted-Knuth} correspondence. We refer the reader to \cite{EC2} for a full description of the bijection, proofs and properties. The most basic property is that if the row sums of $A$ are $\alpha$, i.e. $\sum_j a_{ij} =\alpha_i$, and the column sums are $\beta$, i.e. $\sum_i a_{ij}=\beta_j$, then the type of $P$ is $\beta$ and the type of $Q$ is $\alpha$. 
RSK translates many combinatorial properties. For example, if $A$ is a permutation matrix corresponding to the permutation $w$, and $\sh(P)=\sh(Q)=\lambda$, then the length of the longest increasing subsequence of $w$ is equal to $\lambda_1$ and the length of the longest decreasing subsequence is equal to $l(\lambda)$. This fact extends to general matrices (\textbf{Schensted's theorem}), where we replace the permutation by a double array listing $\begin{pmatrix} i \\j\end{pmatrix}$ each $a_{ij}$ times in order of increasing $i$s and then $j$s and `increasing subsequence' is replaced by `weakly increasing subsequence of  $\begin{pmatrix} i \\j\end{pmatrix}$s' and `decreasing subsequence' is replaced by `strictly decreasing' in both $i$ and $j$. 

We will illustrated the statements above by an example. Let $A= \begin{pmatrix} 1&0&2\\ 0 & 2 & 0\\ 1 & 1 & 0\end{pmatrix}$, then the corresponding double array is $w_A=\begin{pmatrix} 1&1&1 &2&2&3& 3 \\ 1 & 3 & 3 & 2& 2&1 & 2 \end{pmatrix}$. Applying RSK to $A$ we get $(P,Q)$ with 
$$P = \ytableaushort{1122,23,3} \quad \text{and}\quad Q=\ytableaushort{1113,22,3}.$$ We have that $\lambda=\sh(P)=\sh(Q)=(4,2,1)$ and a longest increasing subsequence of $w_A$ is indeed of length $4=\lambda_1$:  $\begin{pmatrix} \bu{1}&1&1 &\bu{2}&\bu{2}&3& \bu{3} \\ \bu{1} & 3 & 3 & \bu{2}& \bu{2}&1 & \bu{2} \end{pmatrix}$ and the longest decreasing subsequence is of length $l(\lambda)=3$: 
$\begin{pmatrix} 1&1&\bu{1} &\bu{2}&2&\bu{3}& 3 \\ 1 & 3 & \bu{3} &\bu{ 2}& 2&\bu{1} & 2 \end{pmatrix}$.

\ytableausetup{boxsize=2ex}
An important and remarkable combinatorial property of standard Young tableaux is the existence of a product type formula for the number of SYTs of a given shape $\lambda$, denoted $f_{\lambda}$. The so called \textbf{hook-length formulas} for the number of straight  standard Young tableaux (SYT) of shape $\lambda$ are:
$$ f_{\lambda}=\frac{|\lambda|!}{\prod_{u \in \lambda} h_u},\qquad \text{ hook $h_u$: }\ytableaushort{{}{}{}{}{}{},{}{*(gray)u}{*(gray)}{*(gray)}{*(gray)},{}{*(gray)}{}{},{}{*(gray)},{}}$$
and the number of standard Young tableaux of shifted shape $\lambda$ 
$$g_{\lambda}=\frac{|\lambda|!}{\prod_u h_u}\qquad  \text{ hook $h_u$: }\ytableaushort{{}{}{*(gray)u}{*(gray)}{*(gray)}{*(gray)}{*(gray)},\none{}{*(gray)}{}{}{}{},\none\none {*(gray)}{}{}{},\none\none\none {*(gray)}{*(gray)}{*(gray)},\none\none\none\none{}}$$

We are now going to define our main objects of study. 

Let $\lambda =(\lambda_1,\lambda_2,\ldots)$ and $\mu=(\mu_1,\mu_2,\ldots)$ be integer partitions, such that $\lambda_i \geq \mu_i$.
A \textbf{straight diagram of truncated shape $\lambda \setminus \mu$} is a left justified array of boxes, such that row $i$ has $\lambda_i - \mu_i$ boxes. If $\lambda$ has no equal parts we can define a \textbf{shifted diagram of truncated shape $\lambda \setminus \mu$} as an array of boxes, where row $i$ starts one box to the right of the previous row $i-1$ and has $\lambda_i-\mu_i$ number of boxes. 
For example
\ytableausetup{boxsize=1.5ex}
$$D_1=\ydiagram{3,4,6,6,5}\; ,\qquad D_2 = \ydiagram{3,1+5,2+6,3+2}$$
$D_1$ is of straight truncated shape $(6,6,6,6,5) \setminus (3,2)$ and $D_2$ is of shifted truncated shape $(8,7,6,2)\setminus(5,2)$.

We define standard and semistandard Young tableaux and plane partitions of truncated shape the usual way except this time they are fillings of truncated diagrams. A \textbf{standard truncated Young tableaux} of shape $\lambda \setminus \mu$ is a filling of the corresponding truncated diagram with the integers from 1 to $|\lambda|-|\mu|$, such that the entries across rows and down columns are increasing and each number appears exactly once. A \textbf{plane partition of truncated shape} $\lambda \setminus \mu$ is a filling of the corresponding truncated diagram with integers such that they weakly increase along rows and down columns. For example
\ytableausetup{boxsize=2ex}
$$T_1=\ytableaushort{124,\none 3578,\none\none 69}\; , \quad T_2=\ytableaushort{113,\none 2445,\none\none 56}\; ,\quad T_3=\ytableaushort{113,\none 1446,\none\none 45}$$ 
are respectively a standard Young tableaux (SYT), a semi-standard Young tableaux (SSYT) and a plane partition (PP) of shifted truncated shape $(5,4,2)\setminus(2)$. We also define \textbf{reverse} PP, SSYT, SYT by reversing all inequalities in the respective definitions (replacing weakly/strictly increasing with weakly/strictly decreasing). For this paper it will be more convenient to think in terms of the reverse versions of these objects.

In this paper we will consider truncation by staircase shape $\delta_k=(k,k-1,k-2,\ldots,1)$ of shifted staircases and straight rectangles. We will denote by $T[i,j]$ the entry in the box with coordinate $(i,j)$ in the diagram of $T$, where $i$ denotes the row number counting from the top and $j$ denotes the $j-$th box in this row counting from the beginning of the row.

For any shape $D$, let
$$F_D(q) = \sum_{T\; : \;PP,\;\sh(T)=D} q^{\sum T[i,j]}$$
be \textbf{the generating function} for the sum of the entries of all plane partitions of shape $D$.

\section{A bijection with skew SSYT}
\ytableausetup{boxsize=2.5ex}
We will consider a map between truncated plane partitions and skew Semi-Standard Young Tableaux which will enable us to enumerate them using Schur functions. 

As a basic setup for this map we first consider truncated shifted plane partitions of staircase shape $\delta_n \setminus \delta_k$. Let $T$ be such a plane partition. Let $\lambda^j = (T[1,j], T[2,j],\ldots, T[n-j,j])$ - the sequence of numbers in the $j$th diagonal of $T$. For example, if $T=\ytableaushort{865,\none 643,\none\none 42,\none\none\none 1}\;$, then $\lambda^1=(8,6,4,1)$, $\lambda^2=(6,4,2)$ and $\lambda^3=(5,3)$. 

Let $P$ be a reverse skew tableaux of shape $\lambda^1 / \lambda^{n-k}$, such that the entries filling the subshape $\lambda^j / \lambda^{j+1}$ are equal to $j$, i.e.\ it corresponds to the sequence $\lambda^{n-k} \subset \lambda^{n-k-1}\subset \cdots \subset \lambda^1$. The fact that this is all well defined follows from the inequalities that the $T[i,j]$'s satisfy by virtue of $T$ being a plane partition. Namely, $\lambda^{j+1} \subset \lambda^j$, because $\lambda^j_i = T[i,j] \geq T[i,j+1] = \lambda^{j+1}_i$. Clearly the rows of $P$ are weakly decreasing. The columns are strictly decreasing, because for each $j$ the entries $j$ in the $i$th row of $P$ are in positions $T[i,j+1]+1$ to $T[i,j]< T[i-1, j+1]+1$, so they appear strictly to the left of the $j$s in the row above ($i-1$).

Define $\phi(T)=P$, $\phi$ is the map in question. Given a reverse skew tableaux $P$ of shape $\lambda \setminus \mu$ and entries smaller than $n$ we can obtain the inverse shifted truncated plane partition $T= \phi^{-1}(P)$ as
$T[i,j] = \max (s|P[i,s]\geq j)$; if no such entry of $P$ exists let $s=0$.

For example we have that
\begin{equation}
T=\ytableaushort{8765,\none 7543,\none\none 532,\none\none\none 31,\none\none\none\none 1}\qquad \xrightarrow{\phi} \qquad
 P = \ytableaushort{\none\none\none\none\none 321,\none\none\none 3211,33211,211,1}
\end{equation}

Notice that 
\begin{equation} \label{truncatedtoschur}
\sum T[i,j] = \sum P[i,j] + |\lambda^{n-k}|(n-k),
\end{equation}
where $\sh(P) = \lambda^1 /\lambda^{n-k}$. 

The map $\phi$ can be extended to any truncated shape, then the image will be tuples of SSYTs with certain restrictions. For the purposes of this paper we will extend it to truncated plane partitions of shape $(n^m) \setminus \delta_k$ as follows.   

Let $T$ be a plane partition of shape $n^m \setminus \delta_k$ and assume that $n\leq m$ (otherwise we can reflect about the main diagonal). Let $\lambda=(T[1,1],T[2,2],\ldots,T[n,n])$, $\mu =(T[1,n-k],T[2,n-k+1],\ldots,T[k+1,n])$ and let $T_1$ be the portion of $T$ above and including the main diagonal, hence of shifted truncated shape $\delta_n \setminus \delta_k$, and $T_2$ the transpose of the lower portion including the main diagonal, a shifted PP of shape $(m,m-1,\ldots,m-n+1)$.

Extend $\phi$ to $T$ as $\phi(T) = (\phi(T_1),\phi(T_2))$. Here $\phi(T_2)$ is a SSYT of at most $n$ rows (shape $\lambda$) and filled with $[1,\ldots,m]$ the same way as in the truncated case. 

As an example with $n=5,m=6,k=2$ we have
\begin{gather*}
\begin{aligned}
T=\ytableaushort{764,6644,44332,43222,32211,21111} && T_1=\ytableaushort{764,\none 644,\none\none 332,\none\none\none 22,\none\none\none\none 1}&& T_2=\ytableaushort{764432,\none 64321,\none\none 3221,\none\none\none 211,\none\none\none\none 11},
\end{aligned} \\
\begin{aligned}
\phi(T) =\quad & \left( \ytableaushort{\none\none\none\none 221,\none\none\none\none 11,\none\none 2,22,1},\ytableaushort{6654221,543211,431,31,2} \right) 
\end{aligned}
\end{gather*}

We thus have the following
\begin{prop}\label{sumofentries}
The map $\phi$ is a bijection between shifted truncated plane partitions $T$ of shape $\delta_n \setminus \delta_k$ filled with nonnegative integers and (reverse) skew semi-standard Young tableaux with entries in $[1,\ldots,n-k-1]$ of shape $\lambda /\mu$ with $l(\lambda)\leq n$ and $l(\mu) \leq k+1$. Moreover, 
$\sum_{i,j} T[i,j] = \sum_{i,j} P[i,j] + |\mu|(n-k)$. 
Similarly $\phi$ is also a bijection between truncated plane partitions $T$ of shape $n^m \setminus \delta_k$ and pairs of SSYTs $(P,Q)$, such that $\sh(P)=\lambda / \mu$, $\sh(Q)=\lambda$ with $l(\lambda) \leq n$, $l(\mu)\leq k+1$ and $P$ is filled with $[1,\ldots,n-k-1]$, $Q$ with $[1,\ldots,m]$. Moreover, $\sum T[i,j] = \sum P[i,j] + \sum Q[i,j] -|\lambda|+|\mu|(n-k)$. 
\end{prop}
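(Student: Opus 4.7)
The plan is to verify well-definedness of $\phi$ and its inverse in the shifted case, read off the image constraints, derive the sum identity by telescoping, and then reduce the rectangular case to two applications of the shifted version glued along the main diagonal.

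Most of the well-definedness of $\phi$ in the shifted case is already sketched in the text: the PP inequalities $T[i,j] \geq T[i+1,j]$ and $T[i,j] \geq T[i,j+1]$ translate precisely into $\lambda^j$ being a partition and the containment $\lambda^{j+1} \subset \lambda^j$, which yields the row-weak and column-strict decrease of $P$. What remains is to pin down the image and the inverse. The entries of $P$ lie in $\{1,\ldots,n-k-1\}$ because $j$ indexes the horizontal strip $\lambda^j/\lambda^{j+1}$ for $j$ in that range. A direct count of the shape shows that in $\delta_n \setminus \delta_k$ the first column contains $n$ boxes while column $n-k$ contains exactly $k+1$ boxes, giving $l(\lambda) \leq n$ and $l(\mu) \leq k+1$. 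For the inverse rule $T[i,j] := \max\{s : P[i,s] \geq j\}$, the weakly decreasing rows of $P$ force $T[i,j] \geq T[i,j+1]$, while the strictly decreasing columns force $T[i,j] \geq T[i+1,j]$: if $P[i+1,s] \geq j$ then $P[i,s] > P[i+1,s] \geq j$, so the max over row $i$ is at least that over row $i+1$. Verifying $\phi \circ \phi^{-1} = \mathrm{id}$ and $\phi^{-1} \circ \phi = \mathrm{id}$ is a direct unwinding of the two definitions.

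For the sum identity, each box of $P$ in the strip $\lambda^j/\lambda^{j+1}$ contributes $j$, so
\begin{equation*}
\sum_{(i,s)} P[i,s] \;=\; \sum_{j=1}^{n-k-1} j\bigl(|\lambda^j| - |\lambda^{j+1}|\bigr) \;=\; \sum_{j=1}^{n-k-1} |\lambda^j| \;-\; (n-k-1)\,|\lambda^{n-k}|
\end{equation*}
by Abel summation. Since $\sum T[i,j] = \sum_{j=1}^{n-k} |\lambda^j|$, the difference equals $|\lambda^{n-k}| + (n-k-1)|\lambda^{n-k}| = (n-k)|\mu|$, which is the asserted identity.

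For $T$ of truncated rectangular shape $n^m \setminus \delta_k$, I would split along the main diagonal: $T_1$ (the portion above and including the diagonal) sits in the shifted shape $\delta_n \setminus \delta_k$, and the transpose $T_2$ of the below-diagonal part (also including the diagonal) sits in the shifted untruncated shape $(m, m-1, \ldots, m-n+1)$. The first case applies to $T_1$ verbatim, producing the reverse skew SSYT $P$ of shape $\lambda/\mu$. Applying $\phi$ to $T_2$, whose diagonal chain now terminates at $\emptyset$, yields a straight SSYT $Q$ of shape $\lambda$ with entries in $[1,m]$, and the same telescoping argument gives $\sum T_2 = \sum Q$. Since the main diagonal is counted in both $T_1$ and $T_2$ and sums to $|\lambda|$, adding the two identities yields $\sum T = \sum P + (n-k)|\mu| + \sum Q - |\lambda|$, as claimed. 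The main obstacle throughout is simply bookkeeping: reconciling shifted versus straight coordinates for $T_1$ and $T_2$, and correctly accounting for the shared diagonal in the rectangular case.
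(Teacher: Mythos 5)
Your proposal is correct and takes essentially the same route as the paper: the proposition there is a summary of the immediately preceding construction, which reads off the diagonals $\lambda^j$, verifies $\lambda^{j+1}\subset\lambda^j$ and the row/column conditions on $P$ from the plane-partition inequalities, and handles the rectangle $n^m\setminus\delta_k$ by splitting along the main diagonal into $T_1$ and $T_2$ exactly as you do. The only difference is that you spell out the Abel-summation derivation of $\sum T = \sum P + (n-k)|\mu|$ and the diagonal double-count $-|\lambda|$, which the paper asserts without computation.
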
 

\section{Schur function identities}

We will now consider the relevant symmetric function interpretation arising from the map $\phi$. Substitute the entries $1,\ldots$ in the skew SSYTs in the image with respective variables $x_1,\ldots$ and $z_1,\ldots$. The idea is to evaluate the resulting expressions at certain finite specializations for $x$ and $z$( e.g. $x=(q,q^2,\ldots,q^{n-k-1})$ and $z=(1,q,q^2,\ldots,q^{m-1})$) to obtain generating functions for the sum of entries in the truncated plane partitions which will later allow us to derive enumerative results. 

For the case of shifted truncated shape $\delta_n \setminus \delta_k$ we have the corresponding sum
\begin{align}\label{single}
S_{n,k}(x;t)=\sum_{\lambda, \mu|l(\lambda) \leq n, l(\mu)\leq k+1}s_{\lambda/\mu}(x_1,\ldots,x_{n-k-1})t^{|\mu|},
\end{align}
and for the straight truncated shape $n^m \setminus \delta_k$
\begin{align}\label{double}
D_{n,m,k}(x;z;t)=\sum_{\lambda,\mu| l(\lambda)\leq n, l(\mu)\leq k+1} s_{\lambda}(z)s_{\lambda/\mu}(x)t^{|\mu|}.
\end{align}

We need to find formulas when $x_i=0$ for $i>n-k-1$ and $z_i=0$ for $i>m$. Keeping the restriction $l(\mu)\leq k+1$ we have that $s_{\lambda/\mu}(x)=0$ if $l(\lambda)>n$ and this allows us to drop the length restriction on $\lambda$ in both sums. 

From now on the different sums will be treated separately.
Consider another set of variables $y=(y_1,\ldots,y_{k+1})$ which together with $(x_1,\ldots,x_{n-k-1})$ form a set of $n$ variables.
Using Cauchy's first identity in \eqref{cauchy} we have that 
\begin{align*}
&\sum_{\lambda|l(\lambda)\leq n} s_{\lambda}(x_1,\ldots,x_{n-k-1},ty_1,\ldots,ty_{k+1}) \\
&= \prod \frac{1}{1-x_i} \prod_{i<j\leq n-k-1} \frac{1}{1 - x_i x_j} \prod_{i<j\leq k+1} \frac{1}{1-t^2y_iy_j} \prod_{i,j} \frac{1}{1 - x_ity_j} \prod \frac{1}{1-ty_i},
\end{align*}
where the length restriction drops because $s_{\lambda}(u_1,\ldots,u_n)=0$ when $l(\lambda)>n$. 
On the other hand we have that for any $\lambda$
$ s_{\lambda}(x_1,\ldots,x_{n-k-1},ty_1,\ldots,y_{k+1}t)
= \sum_{\mu|l(\mu) \leq k+1} s_{\lambda/\mu}(x_1,\ldots, x_{n-k-1}) s_{\mu}(ty_1,\ldots,ty_{k+1}),$
since again $s_{\mu}(yt)=0$ if $l(\mu) >k+1$. 
We thus get
\begin{align}\label{step1}
\prod \frac{1}{1-x_i} \prod_{i<j\leq n-k-1} \frac{1}{1 - x_i x_j} & \prod_{i<j\leq k+1} \frac{1}{1-t^2y_iy_j} \prod_{i,j} \frac{1}{1 - x_ity_j} \prod \frac{1}{1-ty_i} \\
&=\sum_{\lambda,\mu|l(\mu) \leq k+1} s_{\lambda/\mu}(x_1,\ldots, x_{n-k-1}) s_{\mu}(y_1,\ldots,y_{k+1})t^{|\mu|}\notag
\end{align}
We now need to extract the coefficients of $s_{\mu}(y)$ from both sides of \eqref{step1} to obtain a formula for $S_{n,k}(x;t)$. To do so we will use the determinantal formula for the Schur functions, namely that
\begin{align*}
s_{\nu}(u_1,\ldots,u_p)=
\frac{a_{\nu+\delta_p}(u)}{a_{\delta_p}(u)}=
\frac{\det[ u_i^{\nu_j+p-j}]_{i,j=1}^{p}}{\det[u_i^{p-j}]_{i,j=1}^{p}}.\\
\end{align*}
We also have that $a_{\delta_p}(z_1,\ldots,z_p) = \prod_{i<j}(z_i-z_j)$. Substituting $s_{\mu}(y)$ with $a_{\delta_{k+1}+\mu}(y)/a_{\delta_{k+1}}(y)$ in the right-hand side of \eqref{step1} and multiplying both sides by $a_{\delta_{k+1}}(y)$ we obtain
\begin{align} \label{step2}
\sum_{\lambda,\mu} s_{\lambda/\mu}(x_1,\ldots, x_{n-k-1})& a_{\mu+\delta_{k+1}}(y_1,\ldots,y_{k+1})t^{|\mu|} \\
&= \prod \frac{1}{1-x_i} \prod_{i<j\leq n-k-1} \frac{1}{1 - x_i x_j} \prod_{i<j\leq k+1} \frac{y_i-y_j}{1-t^2y_iy_j} \prod_{i,j} \frac{1}{1 - x_ity_j} \prod \frac{1}{1-ty_i}  \notag
\end{align}  
Observe that $a_{\alpha}(u_1,\ldots,u_p) = \sum_{w \in S_p} \sgn(w) u_{w_1}^{\alpha_1}\cdots u_{w_p}^{\alpha_p}$ with $\alpha_i>\alpha_{i+1}$ has exactly $p!$ different monomials in $y$, each with a different order of the degrees of $u_i$ (determined by $w$). Moreover, if $\alpha \neq \beta$ are partitions of distinct parts, then $a_{\alpha}(u)$ and $a_{\beta}(u)$ have no monomial in common. 
Let $$A(u) = \sum_{\alpha| \alpha_i>\alpha_{i+1}} \sum_{w\in S_p} \sgn(w) u_{w_1}^{\alpha_1}\cdots u_{w_p}^{\alpha_p}.$$ 
For every $\beta$ of strictly decreasing parts, every monomial in $a_{\beta}(u)$ appears exactly once and with the same sign in $A(u)$, so $a_{\beta}(u)A(u^{-1})$ has coefficient at $u^0$ equal to $p!$.
Therefore we have
\begin{align}
[y^0]&\left( \sum_{\lambda,\mu} s_{\lambda/\mu}(x_1,\ldots, x_{n-k-1}) a_{\mu+\delta_{k+1}}(y_1,\ldots,y_{k+1})t^{|\mu|} A(y^{-1}) \right) \\
&= (k+1)!\sum_{\lambda, \mu|l(\mu)\leq k+1} s_{\lambda/\mu}(x_1,\ldots, x_{n-k-1})t^{|\mu|} = (k+1)!S_{n,k}(x;t).
\end{align}

Using the fact that $a_{\alpha}(u) =s_{\nu}(u)a_{\delta}(u)$ for $\nu=\alpha-\delta_p$ and Cauchy's formula for the sum of the Schur functions, second equation in \eqref{cauchy}, we have
$$A(u) = \sum_{\nu} s_{\nu}(u) a_{\delta_p}(u) = \prod \frac{1}{1-u_i} \prod_{i<j} \frac{(u_i-u_j)}{1-u_iu_j}.$$

In order for $A(y^{-1})$ and $\sum s_{\mu}(y) t^{|\mu|}$ to converge we need $1< |y_i| < |t^{-1}|$ for every $i$. For $S_{n,k}(x;t)$ to also converge, let $|x_j|<1$ for all $j$. 

We also have that for any doubly infinite series $f(y)$, $[y^0]f(y) = \frac{1}{2\pi i}\int_C f(y)y^{-1} dy$, on a circle $C$ in the $\mathbb{C}$ plane centered at 0 and within the region of convergence of $f$. Rewriting $A(y^{-1})$ in terms of $y$   similar to \eqref{step2} (where a sign of $(-1)^{\binom{k+1}{2}}$ appears) we obtain the formula for $S_{n,k}(x;t)$ through a complex integral. 
\begin{prop}\label{integral} We have that
\begin{align*}
S_{n,k}(x;t) =&\frac{(-1)^{\binom{k+1}{2}}}{(k+1)!}  \prod \frac{1}{1-x_i} \prod_{i<j\leq n-k-1} \frac{1}{1 - x_i x_j} \\
 \frac{1}{(2\pi i)^{k+1}}&  \int_{T} \prod_{i<j\leq k+1} \frac{(y_i-y_j)^2}{1-t^2y_iy_j} \prod_{i,j} \frac{1}{1 - x_ity_j} \prod \frac{1}{1-ty_i} \prod \frac{1}{y_i-1} \prod_{i<j} \frac{1}{y_iy_j-1} dy_1\cdots dy_{k+1}, 
\end{align*}
where $T=C_1 \times C_2 \times \cdots C_p$ and $C_i =\{z\in\mathbb{C}||z|=1+\epsilon_i\}$ for $\epsilon_i <|t^{-1}|-1$.
\end{prop}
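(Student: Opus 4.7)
The proof consists of assembling the ingredients already laid out in the paragraphs preceding the statement. Starting from identity \eqref{step2}, I would multiply both sides by $A(y^{-1})$ and extract the constant term in $y_1,\ldots,y_{k+1}$. On the sum side, the key observation recorded earlier, namely that $[y^0]\bigl(a_\beta(y)A(y^{-1})\bigr)=(k+1)!$ for every $\beta$ of strictly decreasing parts (because every monomial of $a_\beta(y)$ appears exactly once in $A(y^{-1})$ after clearing, and with matching sign) while it vanishes for $\beta$ with a repeated part, collapses the resulting double sum to $(k+1)!\,S_{n,k}(x;t)$.

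On the product side I would substitute the closed form $A(u)=\prod_i(1-u_i)^{-1}\prod_{i<j}(u_i-u_j)/(1-u_iu_j)$ established just above, then clear negative powers of $y_i$ by multiplying numerator and denominator of each factor by the appropriate monomial. A direct computation gives
\[
A(y^{-1}) \;=\; (-1)^{\binom{k+1}{2}}\,\frac{\prod_i y_i}{\prod_i (y_i-1)}\,\prod_{i<j}\frac{y_i-y_j}{y_iy_j-1},
\]
the global sign coming from the $\binom{k+1}{2}$ swaps $y_j-y_i=-(y_i-y_j)$. Multiplying the right-hand side of \eqref{step2} by this expression combines its Vandermonde $\prod_{i<j}(y_i-y_j)$ with the new one to yield the $\prod_{i<j}(y_i-y_j)^2$ in the numerator of the integrand, while the remaining factors in $x$ pull outside the integral.

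To finish, I would pass from constant-term extraction to a contour integral via $[y^0]f(y)=\tfrac{1}{2\pi i}\oint f(y)\,dy/y$ applied in each variable, which absorbs the leftover $\prod_i y_i$ into the measure $\prod dy_i/y_i$. The annuli $|y_i|=1+\varepsilon_i$ with $\varepsilon_i<|t^{-1}|-1$, together with the hypothesis $|x_j|<1$, place all the Laurent expansions in play, $(1-ty_i)^{-1}$, $(1-t^2y_iy_j)^{-1}$, $(1-x_ity_j)^{-1}$, $(1-y_i^{-1})^{-1}$, and $(1-y_i^{-1}y_j^{-1})^{-1}$, inside their common region of absolute convergence, so Fubini justifies interchanging $[y^0]$ with every summation and with the contour integrations in the different $y_i$.

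The main subtlety I expect is twofold: verifying rigorously the combinatorial identity that pulls out $(k+1)!$ (ensuring that contributions from non-strict exponent vectors in $a_{\mu+\delta_{k+1}}(y)A(y^{-1})$ genuinely cancel on the constant-term level) and careful bookkeeping of the three independent sign conventions $(y_i-y_j)\leftrightarrow(y_j-y_i)$, $(1-y_iy_j)\leftrightarrow(y_iy_j-1)$, and $(1-y_i)\leftrightarrow(y_i-1)$ so that the final prefactor ends up as $(-1)^{\binom{k+1}{2}}/(k+1)!$ rather than some accidental companion sign. Once these are in order, the proposition follows by direct substitution.
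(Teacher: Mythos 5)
Your proposal is correct and follows essentially the same route as the paper: multiply \eqref{step2} by $A(y^{-1})$, use the fact that $[y^0]\bigl(a_\beta(y)A(y^{-1})\bigr)=(k+1)!$ for strict $\beta$ to collapse the sum to $(k+1)!\,S_{n,k}(x;t)$, substitute the closed product form of $A$, and convert the constant-term extraction into a contour integral, with the sign $(-1)^{\binom{k+1}{2}}$ arising exactly as you describe from rewriting $A(y^{-1})$ in terms of $y$. The convergence conditions you invoke ($1<|y_i|<|t^{-1}|$, $|x_j|<1$) are the same ones the paper imposes.
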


For the case of \textbf{straight shapes} we are looking for formulas for 
$$\sum_{\lambda,\mu} s_{\lambda}(z)s_{\lambda/\mu}(x)t^{|\mu|}$$
with certain length restrictions for $\lambda$ and $\mu$ and certain values of $x,z,t$.
Without the length restrictions we have by Cauchy's identity, \eqref{cauchy}, that
$$\sum_{\lambda,\mu}s_{\lambda}(z) s_{\lambda/\mu}(x)s_{\mu}(y) = \sum_{\lambda} s_{\lambda}(z)s_{\lambda}(x,y) =\prod \frac{1}{1-z_ix_j} \prod \frac{1}{1-z_iy_j}.$$
Expanding the second product on the right-hand side using Cauchy's identity again we obtain
\begin{align*}
\sum_{\lambda,\mu} s_{\lambda}(z)s_{\lambda/\mu}(x)s_{\mu}(y)= \prod \frac{1}{1-z_ix_j}\sum_{\mu}s_{\mu}(z)s_{\mu}(y)
\end{align*}
Comparing the coefficients for $s_{\mu}(y)$ on both sides we get for any $\mu$
\begin{align}\label{fixedmu}
\sum_{\lambda} s_{\lambda}(z)s_{\lambda/\mu}(x)=\prod \frac{1}{1-z_ix_j}\sum_{\mu}s_{\mu}(z).
\end{align}
For the \textbf{straight shape case} we thus find
$$D_{n,m,k}(x;z;t)=\sum_{\lambda,\mu| l(\lambda)\leq n, l(\mu)\leq k+1} s_{\lambda}(z)s_{\lambda/\mu}(x)t^{|\mu|}=\prod \frac{1}{1-z_ix_j} \sum_{\mu,l(\mu)\leq k+1}s_{\mu}(z)t^{|\mu|},$$
where the length restriction on $\lambda$ becomes redundant when evaluating the sum at
$x
=(x_1,\ldots,x_{n-k-1})$ since then $s_{\lambda/\mu}(x)=0$ when $l(\lambda)>l(\mu)+n-k-1\leq k+1+n-k-1=n$.

\begin{prop}\label{d-formula}
We have that
\begin{align*}
D_{n,m,k}(x,z;t) = \prod_{i,j} \frac{1}{1-x_iz_j} \left(\sum_{\nu|l(\nu) \leq k+1} s_{\nu}(zt)\right).
\end{align*}
\end{prop}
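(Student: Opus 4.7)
The proof plan is essentially to assemble the identities already developed in the preceding paragraphs, with a careful check that the length restriction on $\lambda$ can be dropped. First I would swap the order of summation in the definition
$$D_{n,m,k}(x;z;t)=\sum_{\lambda,\mu\,|\,l(\lambda)\leq n,\,l(\mu)\leq k+1} s_{\lambda}(z)\,s_{\lambda/\mu}(x)\,t^{|\mu|}
=\sum_{\mu\,|\,l(\mu)\leq k+1}t^{|\mu|}\sum_{\lambda\,|\,l(\lambda)\leq n} s_{\lambda}(z)\,s_{\lambda/\mu}(x).$$
The point is that under the specialization $x=(x_1,\dots,x_{n-k-1})$ the inner summand vanishes for any $\lambda$ with $l(\lambda)>n$: a skew Schur function $s_{\lambda/\mu}$ in $n-k-1$ variables vanishes as soon as $l(\lambda)-l(\mu)>n-k-1$, and since $l(\mu)\leq k+1$ this forces $l(\lambda)\leq n$ on every nonvanishing term. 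Hence the restriction $l(\lambda)\leq n$ is free, and I may sum over all $\lambda$.

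Next I would invoke the identity \eqref{fixedmu} (established in the excerpt via two applications of Cauchy's identity) to evaluate the inner sum for each fixed $\mu$:
$$\sum_{\lambda} s_{\lambda}(z)\,s_{\lambda/\mu}(x) \;=\; \prod_{i,j}\frac{1}{1-x_i z_j}\; s_{\mu}(z).$$
Substituting this into the expression above and factoring out the $\mu$-independent product gives
$$D_{n,m,k}(x;z;t)=\prod_{i,j}\frac{1}{1-x_i z_j}\sum_{\mu\,|\,l(\mu)\leq k+1} s_{\mu}(z)\,t^{|\mu|}.$$

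Finally I would use the homogeneity of Schur functions, namely $s_{\mu}(tz_1,\dots,tz_m)=t^{|\mu|}s_{\mu}(z_1,\dots,z_m)$, to rewrite $s_\mu(z)t^{|\mu|}=s_\mu(zt)$ and obtain the claimed formula. There is no real obstacle here: the only subtlety is the justification for dropping the length restriction on $\lambda$, which follows from the vanishing property of $s_{\lambda/\mu}$ in few variables, and the rest is a direct assembly of \eqref{cauchy}, \eqref{fixedmu}, and homogeneity.
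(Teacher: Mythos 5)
Your proposal is correct and follows essentially the same route as the paper: drop the length restriction on $\lambda$ using the vanishing of $s_{\lambda/\mu}$ in $n-k-1$ variables when $l(\lambda)>l(\mu)+n-k-1$, apply the identity \eqref{fixedmu} termwise in $\mu$, and absorb $t^{|\mu|}$ into the Schur function by homogeneity. No gaps.
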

For the purpose of enumeration of SYTs we will use this formula as it is. Even though there are formulas, e.g. of Gessel and King, for the sum of Schur functions of restricted length in the form of determinants or infinite sums, they would not give the enumerative answer any more easily.

\section{A polytope volume as a limit}

Plane partitions of specific shape (truncated or not) of size $N$ can be viewed as integer points in a cone in $\mathbb{R}^N$. Let $D$ be the diagram of a plane partition $T$, the coordinates of $\mathbb{R}^{|D|}$ are indexed by the boxes present in $T$. Then 
$$C_D=\{(\cdots,x_{i,j},\cdots)\in \mathbb{R}_{\geq 0}^N:\; [i,j]\in D, \; x_{i,j}\leq x_{i,j+1}\text{ if }[i,j+1]\in D, x_{i,j}\leq x_{i+1,j} \text{ if } [i+1,j]\in D\}$$ is the corresponding cone.

Let $P(C)$ be the section of a cone $C$ in $\mathbb{R}_{\geq 0}^N$ with the halfplane $H=\{x|\sum_{[i,j]\in D} x_{i,j}\leq 1\}$. Consider the standard tableaux of shape $T$; these correspond to all linear orderings of the coordinates in $C_D$ and thus also $P_D=P(C_D)$. Considering $T$ as a bijection $D\rightarrow [1,\ldots,N]$, $P_D$ is thus subdivided into chambers $\{x:0\leq x_{T^{-1}(1)}\leq x_{T^{-1}(2)}\leq \cdots x_{T^{-1}(N)}\}\cap H$ of equal volumes, namely $\frac{1}{N!}Vol(\Delta_N)$, where $\Delta_N = H \cap \mathbb{R}_{\geq 0}^N$ is the $N$-simplex. Hence the volume of $P_D$ is \begin{align}\label{volsyt}
Vol_{N-1}(P_D) = \frac{\# T: SYT,\sh(T)=D}{N!} Vol(\Delta_N).
\end{align}

The following lemma helps determine the volume and thus the number of SYTs of shape $D$.
\begin{lemma}
Let $P$ be a $d$-dimensional rational polytope in $\mathbb{R}_{\geq 0}^d$, such that its points satisfy $a_1+\cdots+a_d\leq 1$ for $(a_1,\ldots,a_d)\in P$, and let 
$$F_P(q) =\sum_{n} \sum_{(a_1,\ldots,a_d) \in nP\cap \mathbb{Z}^d } q^{n}.$$ 
We have that the $d$-dimensional volume of $P$ is $$Vol_d(P) = (\lim_{q \rightarrow 1} (1-q)^{d+1}F_P(q))\frac{1}{d!}.$$ 
\end{lemma}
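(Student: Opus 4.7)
The plan is to recognize $F_P(q)$ as the Ehrhart series of the rational polytope $P$. Writing $L_P(n) = |nP \cap \mathbb{Z}^d|$ for the number of lattice points in the $n$th dilate, the double sum in the definition collapses to
\begin{equation*}
F_P(q) = \sum_{n \geq 0} L_P(n)\, q^n.
\end{equation*}
By Ehrhart's theorem, $L_P(n)$ is a quasi-polynomial in $n$ of degree exactly $d$ whose leading coefficient is constant over residue classes modulo the period and equals $\mathrm{Vol}_d(P)$. Thus we may write $L_P(n) = \mathrm{Vol}_d(P)\, n^d + R(n)$, where $R(n)$ is a quasi-polynomial of degree at most $d-1$.

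Next, I would invoke the standard identity
\begin{equation*}
\sum_{n \geq 0} n^k q^n \;=\; \frac{A_k(q)}{(1-q)^{k+1}},
\end{equation*}
where $A_k(q)$ is the Eulerian polynomial satisfying $A_k(1) = k!$. Consequently $(1-q)^{k+1}\sum_{n\geq 0} n^k q^n \to k!$ as $q\to 1^-$, and the same expression with $k$ replaced by any $k' < k$ tends to $0$ after multiplication by $(1-q)^{k+1}$. To handle the quasi-polynomiality, I would decompose $L_P(n)$ according to the residue class of $n$ modulo the period $p$, and apply the identity above termwise to each series $\sum_{n\equiv r \pmod p} n^k q^n$; each of these contributes a pole at $q=1$ of order exactly $k+1$ (with total residue summing to the constant leading coefficient), while lower-degree contributions produce poles of strictly smaller order.

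Combining these pieces, the dominant singular behavior of $F_P(q)$ at $q = 1^-$ is
\begin{equation*}
F_P(q) \;\sim\; \frac{d!\,\mathrm{Vol}_d(P)}{(1-q)^{d+1}},
\end{equation*}
so $(1-q)^{d+1}F_P(q) \to d!\,\mathrm{Vol}_d(P)$, which after division by $d!$ gives the claimed formula. The only step requiring any care is the treatment of the quasi-polynomial period; but since only the top-degree leading coefficient survives the limit and that coefficient is constant across residues, this reduces to the polynomial case. No genuine obstacle arises, making this essentially a packaging of Ehrhart's theorem together with the Eulerian-polynomial singular expansion of $\sum n^k q^n$.
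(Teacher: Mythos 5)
Your argument is correct, but it is not the route the paper takes. You package the statement as a corollary of Ehrhart's theorem: $F_P(q)=\sum_n L_P(n)q^n$ is the Ehrhart series, $L_P(n)$ is a quasi-polynomial of degree $d$ with constant leading coefficient $\mathrm{Vol}_d(P)$, and the identity $\sum_{n\ge 0}n^k q^n = A_k(q)/(1-q)^{k+1}$ with $A_k(1)=k!$ isolates the order-$(d+1)$ pole; the residue-class decomposition correctly disposes of the lower-degree quasi-polynomial terms, since each contributes a pole of order at most $d$. The paper instead avoids Ehrhart entirely. It uses only the elementary fact $\mathrm{Vol}_d(P)=\lim_{n\to\infty}f_P(n)/n^{d}$ (justified by a direct cube-counting approximation), replaces $n^d$ by the rising product $(n+1)\cdots(n+d)$, forms the auxiliary series $G(q)=\sum_n \frac{f_P(n)}{(n+1)\cdots(n+d)}q^{n+d}$, evaluates $\lim_{q\to 1}(1-q)G(q)$ by telescoping (Abel summation), and then recovers $F_P(q)$ from $G$ by differentiating $d$ times and applying L'H\^opital's rule $d$ times. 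The trade-off: your proof is shorter and yields the full singular expansion at $q=1$, but it leans on quasi-polynomiality of $L_P$ as a black box; the paper's proof is self-contained and applies verbatim to any coefficient sequence $f(n)$ satisfying $f(n)/n^d\to V$, with no rationality or polytopal structure needed beyond the existence of that limit. Both establish the lemma as stated.
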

\begin{proof}
Let $f_P(n) = \# \{(a_1,\ldots,a_d) \in nP\}$. Then
$F_P(q)= \sum_n f_P(n)q^n.$
Moreover, $Vol_d(P) = \lim_{n \rightarrow \infty} \frac{f_P(n)}{n^{d}}$ since subdividing an embedding of $P$ in $\mathbb{R}^{d}$  into $(d)$-hypercubes with side $\frac{1}{n}$  we get $f_P(n)$ cubes of total volume $f_P(n)/n^{d}$ which approximate $P$ as $n \rightarrow \infty$.

Moreover since $\lim_{n \rightarrow \infty} \frac{n^{d}}{(n+1)\cdots(n+d)}=1$ we also have $Vol_{d}(P) = \lim_{n \rightarrow \infty} \frac{f_P(n)}{(n+1)\cdots(n+d)}$.
Let $G(q) = \sum_n \frac{f_P(n)}{(n+1)\cdots(n+d)} q^{n+d}$. Then 
$$\tilde{G}(q)=(1-q)G(q) = \sum_n \underbrace{(\frac{f_P(n)}{(n+1)\cdots(n+d)} - \frac{f_P(n-1)}{(n)\cdots(n+d-1)})}_{b_n}q^{n+d}$$
and $\tilde{G}(1) = \sum b_n =\lim_{n\rightarrow \infty} b_1+\cdots +b_n= \lim_{n\rightarrow \infty}\frac{f_P(n)}{(n+1)\cdots(n+d)} = Vol_d(P)$. On the other hand 
\begin{align*}
Vol_d(P)&=\tilde{G}(1) = 
\lim_{q \rightarrow 1}(1-q)G(q) = 
\lim_{q \rightarrow 1} \frac{G'(q)}{(\frac{1}{1-q})'}\\
&=\cdots =\lim_{q \rightarrow 1} \frac{ G^{(d)}(q)}{ (\frac{1}{1-q})^{(d)} }= \lim_{q \rightarrow 1}\frac{ F_p(q)(1-q)^{d+1}}{d!}
\end{align*}
by L'Hopital's rule.
\end{proof}

Notice that if $P=P(C_D)$ for some shape $D$, then since $(a_1,\ldots,a_d) \in mP$ when $(a_1,\ldots,a_d) \in C_D$ and $a_1+\cdots+a_d \leq m$, we have that
\begin{align*}
F_P(q) =& \sum_{a \in C_D} q^{a_1+\cdots+a_d}(1+q+\cdots)
=\frac{1}{1-q}\sum_{a \in C_D\cap \mathbb{Z}^N} q^{|a|}=\frac{1}{1-q}\sum_{T:PP, \sh(T)=D} q^{\sum T[i,j]} =\frac{1}{1-q}F_D(q).
\end{align*}
Using \eqref{volsyt}, the fact that $Vol(\Delta_d) =\frac{1}{d!}$ and this Lemma we get the key fact to enumerating SYTs of truncated shapes using evaluations of symmetric functions.

\begin{prop}\label{limformula}
The number of standard tableaux of shape $D$ with $N$ squares is equal to
$$N!\lim_{q \rightarrow 1} (1-q)^{N}F_D(q).$$
\end{prop}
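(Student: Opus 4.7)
The plan is simply to assemble the three pieces already developed: the preceding Lemma, the geometric interpretation \eqref{volsyt} of standard Young tableaux as chambers subdividing $P_D$, and a short computation linking the polytope generating function $F_{P_D}$ to the plane partition generating function $F_D$.

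First, I would set $P=P_D=P(C_D)$ and $d=N=|D|$ in the preceding Lemma. The polytope $P_D$ is a rational $N$-dimensional polytope in $\mathbb{R}_{\geq 0}^N$ lying inside the unit simplex $\Delta_N$, so the hypotheses of the Lemma are satisfied.

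Next, I would verify the key identity
$$F_{P_D}(q)=\frac{F_D(q)}{1-q}.$$
The reason is that a lattice point of the dilate $nP_D$ is exactly a lattice point $a\in C_D$ whose coordinate sum $|a|$ is at most $n$. Exchanging the order of summation,
$$F_{P_D}(q)=\sum_{a\in C_D\cap\mathbb{Z}^N}\sum_{n\geq |a|}q^n=\frac{1}{1-q}\sum_{a\in C_D\cap\mathbb{Z}^N}q^{|a|}=\frac{F_D(q)}{1-q},$$
where the last equality uses that the integer points of $C_D$ are precisely the plane partitions of shape $D$.

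Finally, I would compare the two expressions for $Vol_N(P_D)$. The Lemma gives
$$Vol_N(P_D)=\frac{1}{N!}\lim_{q\to 1}(1-q)^{N+1}F_{P_D}(q)=\frac{1}{N!}\lim_{q\to 1}(1-q)^N F_D(q),$$
and \eqref{volsyt} together with $Vol(\Delta_N)=1/N!$ yields
$$Vol_N(P_D)=\frac{\#\{T:T\text{ is an SYT of shape }D\}}{(N!)^2}.$$
Equating these two expressions and solving for the number of SYTs gives the stated formula. There is no substantive obstacle here: the argument is a straightforward synthesis of the Lemma with the chamber decomposition of $P_D$. The only care needed is in the dimension/factorial bookkeeping, in particular noticing that the extra $1/(1-q)$ picked up by passing from $F_D$ to $F_{P_D}$ cancels exactly one of the $(1-q)$ factors from the Lemma, leaving the exponent $N$ rather than $N+1$.
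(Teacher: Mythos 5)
Your proof is correct and follows exactly the paper's own route: the identity $F_{P_D}(q)=F_D(q)/(1-q)$ obtained by summing $q^n$ over $n\geq |a|$ for each lattice point $a$ of the cone, the application of the preceding Lemma with $d=N$, and the comparison with the chamber decomposition \eqref{volsyt} using $Vol(\Delta_N)=1/N!$. Nothing further is needed.
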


\section{Standard tableaux of shape shifted staircase truncated by a box }
We are now going to use Propositions \ref{integral} and \ref{limformula} to find the number of standard shifted tableaux of truncated shape $\delta_n \setminus \delta_1$. Numerical results show that a product formula for the general case of truncation by $\delta_k$ does not exist.

First we will evaluate the integral in Proposition \ref{integral} by iteration of the Residue theorem.
For simplicity, let $u_0=t$ $u_i=tx_i$, so the integral becomes 
$$\frac{1}{(2\pi i)^2}\int_{T} 
  \frac{(y_1-y_2)^2}{1-t^2y_1y_2}  \frac{1}{y_2-1} \frac{1}{y_1-1} \frac{1}{y_1y_2-1}\prod_{i\geq 0,j=1,2} \frac{1}{1 - u_iy_j}  dy_1 dy_2$$
Integrating by $y_1$ we have poles at $1$,$y_2^{-1}t^{-2}$, $y_2^{-1}$ and $u_i^{-1}$. Only $1$ and $y_2^{-1}$ are inside $C_1$ and the respective residues are
\begin{align*}
Res_{y_1=1} 
  = -\prod_{i\geq 0} \frac{1}{1-u_i} \frac{1}{2\pi i}\int_{C_2} \frac{1}{(1-t^2y_2)}\prod \frac{1}{1-u_iy_2} dy_2 = 0,
  \end{align*}
 since now the poles for $y_2$ are all outside $C_2$.
 
 For the other residue we have that
 \begin{align*}
 Res_{y_1=y_2^{-1}} &= \frac{1}{2\pi i}\int_{C_2} 
  \frac{(y_2^{-1}-y_2)^2}{1-t^2}  \frac{1}{y_2-1} \frac{1}{y_2^{-1}-1} \frac{1}{y_2} \prod_{i\geq 0} \frac{1}{1 - u_iy_2}\prod_{i\geq 0} \frac{1}{1 - u_iy_2^{-1}}   dy_2 \\
 &= \frac{1}{1-t^2}\frac{1}{2\pi i} \int_{C_2} (1+y_2)^2y_2^{n-3}\prod_{i\geq 0} \frac{1}{y_2 - u_i} \prod_{i\geq 0} \frac{1}{1 - u_iy_2} dy_2
 \end{align*}
 
 If $n\geq 3$ the poles inside $C_2$ are exactly $y_2=u_i$ for all $i$ and so we get a final answer
 \begin{align*}
 -\sum_{i\geq 0} \frac{1}{1-t^2}(1+u_i)^2u_i^{n-3}\prod_{j\neq i} \frac{1}{u_i - u_j} \prod_{j\geq 0} \frac{1}{1 - u_iu_j}
 \end{align*}
 and
 $$S_{n,1}(x;t) =
 \frac{(-1)^{\binom{k+1}{2}+1}}{(k+1)!}
   \prod \frac{1}{1-x_i} 
   \prod_{1\leq i<j\leq n-k-1} \frac{1}{1 - x_i x_j} 
   \sum_{i\geq 0} \frac{1}{1-t^2}(1+u_i)^2u_i^{n-3}
   \prod_{j\neq i} \frac{1}{u_i - u_j}
    \prod_{j\geq 0} \frac{1}{1 - u_iu_j},$$
 where $u_i =tx_i$ with $x_0=1$.
 
We can simplify the sum above as follows.
Notice that for any $p$ variables $v=(v_1,\ldots,v_p)$ we have
$$\sum_{i=1}^{p} \frac{v_i^r}{\prod (v_i - v_j)} = \frac{ \sum_i (-1)^{i-1} v_i \prod_{s<l;l,s \neq i}(v_s-v_l)}{\prod_{s<l} (v_s-v_l)} = \frac{a_{(r-p+1)+\delta_p}(v)}{a_{\delta_p}(v)}=s_{(r-p+1)}(v)=h_{r-p+1}(v),$$
where $h_s(v)=\sum_{i_1\leq i_2\leq \cdots \leq i_s}v_{i_1}v_{i_2}\cdots v_{i_s}$ is the $s$-th homogenous symmetric function.

Then we have that 
\begin{align}
\sum_{i=0}^{n-k-1} \frac{u_i^s}{\prod (u_i - u_j)} \prod \frac{1}{1-u_iu_j} &=  \sum_{i=0}^{n-k-1} \frac{u_i^s}{\prod (u_i - u_j)}\sum_{p} u_i^ph_p(u)\notag \\
&=\sum_{p\geq 0}\sum_{i=0}^{n-k-1} \frac{u_i^{s+p}}{\prod (u_i - u_j)}h_p(u) \notag \\
&= \sum_{p \geq 0} h_{s-n+k+1+p}(u)h_p(u) = c_{s-n+k+1}(u),
\end{align}
where $c_i = \sum_{n\geq 0}h_nh_{n+i}$. 
We then have the new formulas
\begin{align}\label{s_simplified}
S_{n,1}(x;t) =
   \prod \frac{1}{1-x_i} 
   \prod_{1\leq i<j\leq n-k-1} \frac{1}{1 - x_i x_j} \frac{1}{1-t^2}(c_{1}(u)+c_{0}(u)),
\end{align}
where $(1+u_i)^2u_i^{n-3}=u_i^{n-3}+2u_i^{n-2}+u_i^{n-1}$ contributed to $c_{-1}(u)+2c_0(u)+c_1(u)$ and by its definition $c_{-1}=c_1$.
We are now ready to prove the following.
\begin{theorem}
The number of shifted standard tableaux of shape $\delta_n \setminus \delta_1$ is equal to
$$g_n \frac{C_n C_{n-2}}{2 C_{2n-3}},$$
where $g_n=\frac{\binom{n+1}{2}!}{\prod_{0\leq i<j \leq n} (i+j)}$ is the number of shifted staircase tableaux of shape $(n,n-1,\ldots,1)$ and $C_m=\frac{1}{m+1}\binom{2m}{m}$ is the $m-$th Catalan number.
\end{theorem}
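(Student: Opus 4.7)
My plan is to apply Proposition \ref{limformula} to the generating function $F_D(q)$, where $D = \delta_n \setminus \delta_1$. By the bijection $\phi$ of Proposition \ref{sumofentries}, the specialization $x_i = q^i$ (for $i = 1, \ldots, n-2$) and $t = q^{n-1}$ turns $S_{n,1}(x;t)$ into exactly $F_D(q)$: an SSYT entry equal to $i$ contributes $q^i$, while $t^{|\mu|}$ accounts for the shift $|\mu|(n-1)$ in $\sum T = \sum P + |\mu|(n-1)$. Under this specialization the auxiliary variables in \eqref{s_simplified} become the principal sequence $u_i = q^{n-1+i}$ for $i = 0, 1, \ldots, n-2$.

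Proposition \ref{limformula} then gives $f_{\delta_n\setminus\delta_1} = N!\lim_{q\to 1}(1-q)^N F_D(q)$ with $N = \binom{n+1}{2}-1$. Using \eqref{s_simplified}, I would treat the limit factor by factor. The three easy factors $\prod_{i=1}^{n-2}(1-q^i)^{-1}$, $\prod_{1\leq i<j\leq n-2}(1-q^{i+j})^{-1}$, and $(1-q^{2n-2})^{-1}$ carry poles at $q=1$ of orders $n-2$, $\binom{n-2}{2}$, and $1$; each $(1-q^k)^{-1}$ contributes $1/k$ after absorbing a factor of $(1-q)$. A direct check $n-2+\binom{n-2}{2}+1+(2n-3) = N$ shows the pole of $c_0(u)+c_1(u)$ at $q=1$ must have order exactly $2n-3$.

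The main technical step is the computation
\[
L := \lim_{q\to 1}(1-q)^{2n-3}\bigl(c_0(u)+c_1(u)\bigr).
\]
I would use the principal specialization formula $h_m(u) = q^{m(n-1)}\binom{m+n-2}{n-2}_q$ to factor out the $m$-independent denominator $\prod_{i=1}^{n-2}(1-q^i)^2$ and write
\[
c_0(u) = \frac{1}{\prod_{i=1}^{n-2}(1-q^i)^2}\sum_{m\geq 0} q^{2m(n-1)}\prod_{i=1}^{n-2}(1-q^{m+i})^2,
\]
with a parallel expression for $c_1(u)$ involving $\prod_i(1-q^{m+i})(1-q^{m+i+1})$. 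Expanding $\prod_{i=1}^{n-2}(1-yq^i)^2 = \sum_k a_k(q)\,y^k$, substituting $y = q^m$, and summing the geometric series $\sum_m q^{(2(n-1)+k)m} = (1-q^{2(n-1)+k})^{-1}$ reduces the leading $q\to 1$ behaviour to evaluating $\sum_k a_k(1)/(2(n-1)+k)$. Since $a_k(1) = (-1)^k\binom{2n-4}{k}$, the classical partial-fraction identity
\[
\sum_{k=0}^{N}\frac{(-1)^k\binom{N}{k}}{a+k} = \frac{N!\,(a-1)!}{(a+N)!}
\]
with $N = 2n-4$ and $a = 2n-2$ gives $\tfrac{(2n-4)!(2n-3)!}{(4n-6)!}$. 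The expression for $c_1(u)$ yields the same leading constant (its numerator also degenerates to $(1-y)^{2(n-2)}$ at $q=1$), so $L = \tfrac{2(2n-4)!(2n-3)!}{((n-2)!)^2(4n-6)!}$.

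Assembling the four limits produces
\[
f_{\delta_n\setminus\delta_1} = \frac{N!\,(2n-4)!(2n-3)!}{((n-2)!)^3(n-1)(4n-6)!\,\prod_{1\leq i<j\leq n-2}(i+j)}.
\]
The final task is a purely algebraic match against $g_n C_n C_{n-2}/(2C_{2n-3})$, using the factorisation $\prod_{1\leq i<j\leq n}(i+j) = \prod_{1\leq i<j\leq n-2}(i+j)\cdot\tfrac{(2n-3)!}{(n-1)!}\cdot\tfrac{(2n-1)!}{n!}$ together with the standard formulas for $C_m$. The main obstacle is justifying the leading-order asymptotic extraction that produces the partial-fraction identity above — in particular, showing that the alternating sum of $a_k(1)$'s cancels exactly the contributions that would otherwise give a pole of order higher than $2n-3$ — after which the remaining calculation is routine factorial algebra, and a sanity check at $n=3$ (giving $f_D = 1$) and $n=4$ (giving $f_D = 4$) confirms both the $L$ computation and the final simplification.
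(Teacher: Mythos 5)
Your proposal is correct and follows the paper's overall strategy (the bijection $\phi$, Proposition \ref{limformula}, and formula \eqref{s_simplified}), arriving at exactly the same intermediate expression $N!\,(2n-4)!\,(2n-3)!\big/\bigl(((n-2)!)^3(n-1)(4n-6)!\prod_{1\leq i<j\leq n-2}(i+j)\bigr)$; the genuine difference is in how you evaluate $L=\lim_{q\to1}(1-q)^{2n-3}\bigl(c_0(u)+c_1(u)\bigr)$. The paper expands $c_s(x;y)=\sum_l h_l(x)h_{l+s}(y)$ by inclusion--exclusion over lattice paths, each contributing $\prod_{(i,j)\in P}(1-x_iy_j)^{-1}$, and identifies the dominant terms as the $\binom{2n-4}{n-2}$ saturated paths, each giving $\prod_{l=2}^{2n-2}(1-q^{2n-4+l})^{-1}$. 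You instead use the principal specialization $h_m(u)=q^{m(n-1)}\prod_{i=1}^{n-2}(1-q^{m+i})/(1-q^i)$, expand $\prod_i(1-yq^i)^2=\sum_k a_k(q)y^k$, sum the geometric series in $m$, and apply $\sum_{k}(-1)^k\binom{2n-4}{k}/(2n-2+k)=(2n-4)!\,(2n-3)!/(4n-6)!$; both routes yield $L=2\binom{2n-4}{n-2}(2n-3)!/\bigl(((n-2)!)^0\,\cdot\,(4n-6)!\bigr)\cdot\binom{2n-4}{n-2}^{0}$ --- more precisely $L=\tfrac{2(2n-4)!(2n-3)!}{((n-2)!)^2(4n-6)!}$, in agreement with the paper. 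One remark: the ``main obstacle'' you flag is not actually an obstacle. In your representation $c_0(u)=\prod_i(1-q^i)^{-2}\sum_k a_k(q)/(1-q^{2n-2+k})$, the sum is a \emph{finite} sum of simple poles at $q=1$, so its pole order is at most one and the total pole order is at most $(2n-4)+1=2n-3$ automatically; the limit is just the termwise limit $\sum_k a_k(1)/(2n-2+k)$, which is the (positive) beta integral $\int_0^1 v^{2n-3}(1-v)^{2n-4}\,dv$, so no delicate cancellation argument is needed. Your route is arguably more self-contained than the paper's rather terse lattice-path expansion, at the cost of being specific to this one-dimensional ($k=1$) specialization, whereas the paper's $c_s(x;y)$ machinery is set up for general alphabets.
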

\begin{proof}
We will use Proposition \ref{limformula} and the formula \eqref{s_simplified}.

By the properties of $\phi$ we have that for the shape $D=\delta_n \setminus \delta_k$,
$$F_D(q)=\sum_{T|\sh(T)=D} q^{\sum T[i,j]} = \sum_{P=\phi(T)} q^{(n-k)|\mu|+\sum P[i,j]}=S_{n,k}(q,q^2,\ldots,q^{n-k-1};q^{n-k}).$$

Now let $k=1$. For the formula in Proposition \ref{limformula} we have $N=\binom{n+1}{2}-1$ and plugging $x=(q,\ldots,q^{n-2})$, $t=q^{n-1}$ in \eqref{s_simplified} we get
$$\lim_{q\rightarrow 1}(1-q)^N F_D(q) = 
   \prod \frac{1}{1-q^i} 
   \prod_{1\leq i<j\leq n-2} \frac{1}{1 - q^{i+j}} \frac{1}{1-q^{2(n-1)}}(c_{1}(u)+c_{0}(u)),$$
where $u=(q^{n-1},q^n,\ldots,q^{2n-3}).$   

We need to determine $\lim_{q\rightarrow 1}(1-q)^{2n-3} c_s(u)$.
Let $c_s(x;y)=\sum_l h_l(x)h_{l+s}(y)$ where $x=(x_1,\ldots,x_n)$ and $y=(y_1,\ldots,y_m)$. We have that
\begin{align*}
c_s(x;y) &= \sum_l \sum_{i_1\leq\cdots \leq i_l; j_1\leq \cdots \leq j_{l+s}} x_{i_1}\cdots x_{i_l} y_{j_1}\cdots y_{j_{s+l}}\\
&= \sum_{p} h_s(y_1,\ldots,y_p)\sum_{P:(1,p)\rightarrow (n,m)} (-1)^{m+n-p - \#P}\sum_l h_l((xy)_P),
\end{align*} 
where the sum runs over all fully ordered collections of lattice points $P$ in between $(1,p)$ to $(n,m)$ and $(xy)_P = (x_{i_1}y_{j_1},\ldots)$ for $(i_1,j_1),\ldots \in P$ and the $(-1)$s indicate the underlying inclusion-exclusion process.
We also have that, by \eqref{sum_homog}, 
$$\sum_l h_l((xy)_P) = \frac{1}{\prod_{(i,j)\in P} (1-x_iy_j)}.$$
The degree of $1-q$ dividing the denominators after substituting $(x_i,y_j)=(u_i,u_j)=(q^{n-2+i},q^{n-2+j})$ for the evaluation of $c_s(u)$ is equal to the number of points in $P$. $\#P$ is maximal when the lattice path is from $(1,1)$ to $(n-1,n-1)$ and is saturated, so $\max(\#P)=2(n-1)-1=2n-3$. The other summands will contribute 0 when multiplied by the larger power of $(1-q)$ and the limit is taken. For each maximal path we have $\{i+j|(i,j) \in P\} = \{2,\ldots,2n-2\}$ and the number of these paths is $\binom{2n-4}{n-2}$, so
$$(1-q)^{2n-3}c_s(q^{n-1},\ldots,q^{2n-3})= \binom{2n-4}{n-2} \prod_{i=2}^{2n-2} \frac{1-q}{1-q^{2(n-2)+i}} + (1-q)...,$$
where the remaining terms are divisible by $1-q$, hence contribute 0 when the limit is taken.

Now we can proceed to compute $\lim_{q \rightarrow 1} (1-q)^{ N} S_{n,1}(q^1,q^2,\ldots,;q^{n-1})$.
Putting all these together we have that
\begin{align*}
\lim_{q \rightarrow 1}& (1-q)^{\binom{n+1}{2}-1}S_{n,1}(q^1,\ldots,q^{n-2};q^{n-1})  \\
&= \lim_{q\rightarrow 1} \prod_{1\leq i\leq n-2} \frac{1-q}{1-q^i} 
   \prod_{1\leq i<j\leq n-2} \frac{1-q}{1 - q^{i+j}} \frac{1-q}{1-q^{2(n-1)}}\left((1-q)^{2n-3}(c_{1}(u)+c_{0}(u))\right)\\
  &=   \prod_{0\leq i<j\leq n-2} \frac{1}{i+j} \frac{1}{2(n-1)}2\binom{2n-4}{n-2} \prod_{i=2}^{2n-2} \frac{1}{2n-4+i}=  \frac{g_{\delta_{n-2}}}{\binom{n-1}{2}!} \frac{1}{(n-1)}\binom{2n-4}{n-2} \frac{(2n-3)!}{(4n-6)!},
\end{align*}
where $g_{n-2}=\frac{ \binom{n-1}{2}!}{\prod_{0\leq i<j \leq n-2}(i+j)}$ is the number of shifted staircase tableaux of shape $(n-2,\ldots,1)$. After algebraic manipulations we arrive at the desired formula.
\end{proof}

\section{Standard tableaux of shape rectangle truncated by staircase}

We will compute the number of standard tableaux of straight truncated shape $D=n^m \setminus \delta_k$ using propositions \ref{sumofentries}, \ref{limformula} and \ref{d-formula}.

By Proposition \ref{sumofentries} we have that 
\begin{align*}
F_D(q) &= \sum_{T: \sh(T)=D} q^{\sum T[i,j]}\\
&= \sum_{\lambda,\mu|l(\mu)\leq k+1,l(\lambda) \leq n}\sum_{P,\sh(P),\lambda/\mu,Q,\sh(Q)=\lambda} q^{\sum P[i,j] +\sum Q[i,j] -|\lambda| +(n-k)|\mu|} \\
&= \sum_{\lambda,\mu| l(\lambda)\leq n, l(\mu)\leq k+1} s_{\lambda}(1,q,q^2,\ldots,q^{m-1})s_{\lambda/\mu}(q,q^2,\ldots,q^{n-k-1})q^{(n-k)|\mu|},
\end{align*}
which is $D_{n,m,k}(x,z;t)$ for $x=(q,q^2,\ldots,q^{n-k-1})$, $z=(1,q,\ldots,q^{m-1})$ and $t=q^{n-k}$ and from its simplified formula from Proposition \ref{d-formula} and Proposition \ref{limformula} the number of standard tableaux of shape $n^m\setminus \delta_k$ is 
\begin{align}\label{rect_limit}
\lim_{q\rightarrow 1}&(1-q)^{nm-\binom{k+1}{2}}F_D(q)\\
&=\lim_{q\rightarrow 1} \left(\prod_{i=1,j=0}^{n-k-1,m-1} \frac{1-q}{1-q^{i+j}} (1-q)^{m(k+1)-\binom{k+1}{2}}(\sum_{\nu|l(\nu) \leq k+1} s_{\nu}(q^{n-k},q^{n-k+1},\ldots,q^{m-1+n-k}))\right).\notag
\end{align}
We are thus going to compute the last factor. 
\begin{lemma}
Let $p\geq r$ and $N = rp -\binom{r}{2}$. Then for any $s$ we have 
\begin{align*}
\lim_{q\rightarrow 1}(1-q)^N \sum_{\lambda|l(\lambda) \leq r} s_{\lambda}(q^{1+s},\ldots,q^{p+s})=
\frac{g_{(p,p-1,\ldots,p-r+1)}}{N!}\frac{E_1(r,p,s)}{E_1(r,p,0)},
\end{align*}
where $$E_1(r,p,s)=\prod_{r<l<2p-r+2}  \frac{1}{(l+2s)^{r/2}} \prod_{2\leq l\leq r}
\frac{1}{((l+2s)(2p-l+2+2s))^{\lfloor l/2 \rfloor}}$$ for $r$ even and 
$E_1(r,p,s) = \frac{((r-1)/2+s)!}{(p-(r-1)/2+s)!}E_1(r-1,p,s)$ when $r$ is odd and $g_{\lambda}$ is the number of shifted SYTs of shape $\lambda$.
\end{lemma}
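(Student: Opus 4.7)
The plan is to derive a closed form for the sum $\sum_{l(\lambda)\leq r}s_\lambda(x_1,\ldots,x_p)$ via the Weyl determinantal formula for Schur functions, then extract the $q\to 1$ asymptotics through a cancellation analysis. Starting from $s_\lambda(x_1,\ldots,x_p)=\det(x_i^{\lambda_j+p-j})/a_{\delta_{p-1}}(x)$ with $\lambda_j=0$ for $j>r$, the last $p-r$ columns of the numerator matrix are fixed at $x_i^{p-j}$. Expanding the determinant as $\sum_\sigma\sgn(\sigma)\prod_i x_{\sigma(i)}^{\lambda_i+p-i}$, substituting $\lambda_i=m_i+m_{i+1}+\cdots+m_r$ with $m_j\geq 0$, and summing the geometric series $\sum_{m_j\geq 0}(x_{\sigma(1)}\cdots x_{\sigma(j)})^{m_j}$ yields the key identity
\begin{equation*}
\sum_{l(\lambda)\leq r} s_\lambda(x_1,\ldots,x_p) = \frac{1}{a_{\delta_{p-1}}(x)}\sum_{\sigma\in S_p}\sgn(\sigma)\prod_{i=1}^p x_{\sigma(i)}^{p-i}\prod_{j=1}^r\frac{1}{1-x_{\sigma(1)}\cdots x_{\sigma(j)}}.
\end{equation*}
Substituting $x_i=q^{i+s}$, each factor $1/(1-q^{js+\sigma(1)+\cdots+\sigma(j)})$ contributes $(1-q)^{-1}$ and $a_{\delta_{p-1}}(q^{1+s},\ldots,q^{p+s})\sim (1-q)^{\binom{p}{2}}\prod_{d=1}^{p-1}d^{p-d}$ as $q\to 1$.

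The crucial step is the cancellation analysis: the naive per-term divergence $(1-q)^{-r-\binom{p}{2}}$ must reduce to $(1-q)^{-N}$ with $N=rp-\binom{r}{2}$ through the signed sum over $\sigma$. I would organize by grouping terms according to the $r$-subset $I=\{\sigma(1),\ldots,\sigma(r)\}\subset[p]$: the summation over orderings of the complement in positions $r+1,\ldots,p$ produces a Vandermonde $a_{\delta_{p-r-1}}(x_{[p]\setminus I})$, and the Lagrange-type identity $a_{\delta_{p-r-1}}(x_{[p]\setminus I})/a_{\delta_{p-1}}(x)=\pm\prod_{i\in I,\,j\notin I}(x_i-x_j)^{-1}$ reorganizes the limit as a sum over $r$-subsets of $[p]$, each contributing a finite rational expression in $s$ in the limit.

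As an independent check of the normalization, a variant of the $\phi$-bijection from Section~3 (extended to the untruncated shifted staircase) identifies SSYTs of shape $\lambda$ with $l(\lambda)\leq r$ and entries in $[p]$ with reverse shifted plane partitions of shape $\delta=(p,p-1,\ldots,p-r+1)$, preserving $\sum T=\sum \pi$ and sending $|\lambda|\mapsto\sum_i\pi[i,i]$. At $s=0$ this gives $\sum_{l(\lambda)\leq r}s_\lambda(q,\ldots,q^p)=F_\delta(q)$, so Proposition~\ref{limformula} immediately yields the factor $g_\delta/N!$, pinning down the $s=0$ normalization of $E_1$. The main difficulty lies in matching the cancellation-analysis output with the specific product $E_1(r,p,s)$: its parity split ($r$ even vs.\ odd) and the recursion $E_1(r,p,s)=\frac{((r-1)/2+s)!}{(p-(r-1)/2+s)!}E_1(r-1,p,s)$ for odd $r$ strongly suggest a proof by induction on $r$, where passing from even $r-1$ to odd $r$ corresponds combinatorially to adjoining one row of length $p-r+1$ to $\delta$ whose univariate principal specialization yields $((r-1)/2+s)!/(p-(r-1)/2+s)!$ in the $q\to 1$ limit.
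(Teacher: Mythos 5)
Your setup is sound: the identity
\begin{equation*}
\sum_{l(\lambda)\leq r} s_\lambda(x_1,\ldots,x_p) = \frac{1}{a_{\delta_{p-1}}(x)}\sum_{\sigma\in S_p}\sgn(\sigma)\prod_{i=1}^p x_{\sigma(i)}^{p-i}\prod_{j=1}^r\frac{1}{1-x_{\sigma(1)}\cdots x_{\sigma(j)}}
\end{equation*}
is a valid consequence of telescoping $\lambda_j=m_j+\cdots+m_r$ and summing geometric series, and your $s=0$ normalization via the $\phi$-bijection is exactly the device the paper uses. But there is a genuine gap at the heart of the argument: you never actually produce the $s$-dependence, i.e.\ the factor $E_1(r,p,s)/E_1(r,p,0)$, which is the entire content of the lemma beyond the $s=0$ case. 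Your grouping by $r$-subsets $I=\{\sigma(1),\ldots,\sigma(r)\}$ does correctly tame the divergence to order $(1-q)^{-N}$ per group (the prefactor contributes $r(p-r)+\binom{r}{2}$ poles and the geometric factors $r$ more, totalling $N$), but the leading coefficient of each group is the alternating sum $\sum_{\pi}\sgn(\pi)\prod_{j=1}^r\bigl(js+\sum_{i\leq j}\pi(i)\bigr)^{-1}$ over orderings $\pi$ of $I$, which must then be summed over all $\binom{p}{r}$ subsets against Vandermonde-type weights. Evaluating this in closed form and recognizing the antidiagonal product $E_1(r,p,s)$, with its parity split in $r$, is the hard step, and "strongly suggests a proof by induction on $r$" is not a proof. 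Note also that the $s=0$ check cannot rescue you here: it pins down one value, but it only determines the general answer once you already know the limit has the form $M\cdot E_1(r,p,s)$ with $M$ independent of $s$ --- which is precisely what is missing.

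The paper avoids the cancellation problem entirely by a positivity argument. Via RSK and Schensted's theorem, $\sum_{l(\lambda)\leq r}s_\lambda(x)$ is written as a sum over symmetric $0$--$1$ matrices $B$ with no decreasing chain of length $r+1$, each contributing $\prod_{i:B[i,i]=1}\frac{x_i}{1-x_i}\prod_{i>j:B[i,j]=1}\frac{x_ix_j}{1-x_ix_j}$; all terms are positive, so no cancellation analysis is needed. An antidiagonal count shows at most $N$ entries can appear on or above the diagonal, only the maximal configurations survive multiplication by $(1-q)^N$, and --- the key structural point --- every maximal configuration occupies forced antidiagonals and hence contributes the \emph{same} factor $E_q(r,p,s)$ at $x=(q^{1+s},\ldots,q^{p+s})$. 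The limit is therefore $M\cdot E_1(r,p,s)$ with $M$ a count of configurations independent of $s$, and the $s=0$ specialization then yields $M=g_{(p,\ldots,p-r+1)}/(N!\,E_1(r,p,0))$. If you want to complete your determinantal route you must either carry out the signed-sum evaluation explicitly or find a structural reason, as the paper does, why the $s$-dependence factors through the antidiagonal product.
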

\begin{proof}
Consider the Robinson-Schensted-Knuth (RSK) correspondence between SSYTs with no more than $r$ rows filled with $x_1,\ldots,x_p$ and symmetric $p \times p$ integer matrices $A$. The limit on the number of rows translates through Schensted's theorem to the fact that there are no $m+1$ nonzero entries in $A$ with coordinates $(i_1,j_1),\ldots,(i_{r+1},j_{r+1})$, s.t. $i_1<\cdots<i_{r+1}$ and $j_1>\cdots >j_{r+1}$ (i.e. a decreasing subsequence of length $r+1$ in the generalized permutation corresponding to $A$). Let $\mathcal{A}$ be the set of such matrices. Let $\mathcal{A_0}\subset\mathcal{A}$ be the set of $0-1$ matrices satisfying this condition, we will refer to them as allowed configurations. Notice that $A \in \mathcal{A}$ if and only if $B \in \mathcal{B}$, where $B[i,j] = \begin{cases} 1, \text{ if } A[i,j]\neq 0 \\0, \text{ if }A[i,j]=0\end{cases}$.
We thus have that
\begin{align*}
\sum_{\lambda|l(\lambda)\leq r} s_{\lambda}(x_1,\ldots,x_p) &= \sum_{A\in \mathcal{A}} \prod_{i}x_i^{A[i,i]} \prod_{i> j}(x_ix_j)^{A[i,j]}\\
&= \sum_{B \in \mathcal{B}} \prod_{i: B[i,i]=1}(\sum_{a_{i,i}=1}^{\infty}x_i^{a_{i,i}} )\prod_{i>j: B[i,j]=1}(\sum_{a_{i,j}=1}^{\infty} (x_ix_j)^{a_{i,j}})\\
&= \sum_{B \in \mathcal{B}} \prod_{i: B[i,i]=1} \frac{x_i}{1-x_i} \prod_{i>j: B[i,j]=1} \frac{x_ix_j}{1-x_ix_j}.
\end{align*}
Notice that $B$ cannot have more than $N$ nonzero entries on or above the main diagonal. No diagonal $i+j=l$ (i.e.\ the antidiagonals) can have more than $r$ nonzero entries on it because of the longest decreasing subsequence condition. Also if $l\leq r$ or $l> 2p-r+1$, the total number of points on such diagonal are $l-1$ and $2p-l+1$ respectively. Since $B$ is also symmetric the antidiagonals $i+j=l$ will have $r-1$ entries if $l\equiv r-1(\bmod 2)$ and $r$ is odd. Counting the nonzero entries on each antidiagonal on or above the main diagonal gives always exactly $N$ in each case for the parity of $r$ and $p$.
If $B$ has less than $N$ nonzero entries, then
\begin{align*}
\lim_{q \rightarrow 1}(1-q)^N& \prod_{i: B[i,i]=1} \frac{q^{i+1}}{1-q^{i+s}} \prod_{i>j: B[i,j]=1} \frac{q^{i+j+2s}}{1-q^{i+j+2s}} =\\
 \lim_{q \rightarrow 1}(1-q)^{N-|B|>0}& \prod_{i: B[i,i]=1} \frac{q^{i+1}(1-q)}{1-q^{i+s}} \prod_{i>j: B[i,j]=1} \frac{q^{i+j+2s}(1-q)}{1-q^{i+j+2s}} =0,
 \end{align*}
so such $B$s won't contribute to the final answer.
 
Consider now only $B$s with maximal possible number of nonzero entries (i.e.\ $N$), which forces them to have exactly $r$ (or $r-1$) nonzero entries on every diagonal $i+j=l$ for $r<l\leq 2p-r+1$  and all entries in $i+j\leq r$ and $i+j >2p-r+1$. 

If $r$ is even, then there are no entries on the main diagonal when $r<l<2p-r+2$ and so there are $r/2$ terms on each diagonal $i+j=l$. Thus every such $B$ contributes the same factor when evaluated at $x=(q^{1+s},\ldots,q^{p+s})$:
$$E_q(r,p,s):=\prod_{r<l<2p-r+2}  \frac{q^{(l+2s)r/2}}{(1-q^{l+2s})^{r/2}} \prod_{2\leq l \leq r}
\frac{q^{(l+4s +2p-l+2)\lfloor l/2\rfloor}}{((1-q^{l+2s})(1-q^{2p-l+2+2s}))^{\lfloor l/2 \rfloor}}.$$
If $r$ is odd, then the entries on the main diagonal will all be present with the rest being as in the even case with $r-1$, so the contribution is
$$E_q(r,p,s):=\prod_{\frac{r+1}{2}\leq i\leq p-\frac{r+1}{2}+1}\frac{q^{i+s}}{1-q^{i+s}}E_q(r-1,p,s).$$
Let now $M$ be the number of such maximal $B$s in $\mathcal{A}_0$. The final answer after taking the limit is
$M E_1(r,p,s)$, where we have that $E_1(r,p,s) = \lim_{q\rightarrow 1}(1-q)^NE_q(r,p,s)$ as defined in the statement of the lemma.

In order to find $M$ observe that the case of $s=0$ gives
\begin{align}
\lim_{q\rightarrow 1}(1-q)^N \sum_{\lambda|l(\lambda) \leq r} s_{\lambda}(q^{1},\ldots,q^{p}) = M  E_1(r,p,0),
\end{align}
on one hand. On the other hand via the bijection $\phi$ we have that 
\begin{align*}
\sum_{\lambda|l(\lambda \leq m} s_{\lambda}(q^{1},\ldots,q^{n})=\sum_{T} q^{\sum T[i,j]},
\end{align*}
where the sum on the right goes over all shifted plane partitions $T$ of shape $(p,p-1,\ldots,p-r+1)$. Multiplying by $(1-q)^N$ and taking the limit on the right hand side gives us, by the inverse of Proposition~\ref{limformula}, $\frac{1}{N!}$ times the number of standard shifted tableaux of that shape. This number is well known and is $g_{(p,p-1,\ldots,p-r+1)}=\frac{N!}{\prod_u h_u}$, where the product runs over the hooklengths of all boxes on or above the main diagonal of $(p^r,r^{p-r})$. Putting all this together gives
$$M E_1(r,p,0) = \frac{g_{(p,p-1,\ldots,p-r+1)}}{N!}.$$
Solving for $M$ we obtain the final answer:
$$ \frac{g_{(p,p-1,\ldots,p-r+1)}}{N!} \frac{E_1(r,p,s)}{E_1(r,p,0)}.$$
\end{proof}

We can now put all of this together and state
\begin{theorem}
The number of standard tableaux of truncated straight shape $\smash{\underbrace{(n,n,\ldots,n)}_m}\setminus \delta_k$ (assume $n\leq m$), is
\begin{align}
(mn-\binom{k+1}{2})! &\times \frac{f_{(n-k-1)^m}}{(m(n-k-1))!} \times \frac{g_{(m,m-1,\ldots,m-k)}}{((k+1)m-\binom{k+1}{2})!}\frac{E_1(k+1,m,n-k-1)}{E_1(k+1,m,0)}, 
\end{align}
where $$ E_1(r,p,s)=\begin{cases} \prod_{r<l<2p-r+2}  \frac{1}{(l+2s)^{r/2}} \prod_{2\leq l\leq r}
\frac{1}{((l+2s)(2p-l+2+2s))^{\lfloor l/2 \rfloor}}, \; r \text{ even}, \\  
\frac{((r-1)/2+s)!}{(p-(r-1)/2+s)!}E_1(r-1,p,s) , \; r \text{ odd}.\end{cases}$$
\end{theorem}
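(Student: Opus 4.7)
The plan is to simply assemble the three pieces already developed in the paper: the bijection generating-function identity of Proposition \ref{sumofentries}, the Cauchy-product formula of Proposition \ref{d-formula}, and the polytope-volume limit of Proposition \ref{limformula}, together with the preceding Lemma which evaluates the limit of a length-restricted Schur sum at the relevant geometric specialization. Indeed, the derivation of equation \eqref{rect_limit} already carries out the first two steps, and the lemma supplies the last nontrivial piece, so my remaining task is only to bookkeep the factors.

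First I would invoke Proposition \ref{limformula} for $D = n^m \setminus \delta_k$ and $N = nm - \binom{k+1}{2}$, writing the number of SYTs as $N! \lim_{q\to 1}(1-q)^N F_D(q)$, and combine this with equation \eqref{rect_limit}, which splits the limit into two factors: the ``Cauchy part'' $\prod_{i=1,j=0}^{n-k-1,m-1}\frac{1-q}{1-q^{i+j}}$ and the ``Schur-sum part'' $(1-q)^{m(k+1)-\binom{k+1}{2}}\sum_{l(\nu)\leq k+1} s_\nu(q^{n-k},\ldots,q^{m-1+n-k})$. These two exponents sum to $N$, so the split is exact.

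Second, I would evaluate the Cauchy part. Taking $q \to 1$ termwise yields $\prod_{i=1,j=0}^{n-k-1,m-1} \frac{1}{i+j}$. The numbers $\{i+j : 1 \le i \le n-k-1,\, 0\le j \le m-1\}$ are precisely the hook lengths of the rectangle $(n-k-1)^m$, so by the Frame--Robinson--Thrall formula this product equals $f_{(n-k-1)^m}/(m(n-k-1))!$.

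Third, for the Schur-sum part I would apply the preceding Lemma with parameters $r = k+1$, $p = m$, $s = n-k-1$; this is legitimate because the specialization $(q^{n-k},\ldots,q^{m-1+n-k})$ is exactly $(q^{1+s},\ldots,q^{p+s})$, and the length restriction $l(\nu)\le k+1 = r$ matches. The lemma outputs $\dfrac{g_{(m,m-1,\ldots,m-k)}}{(m(k+1)-\binom{k+1}{2})!}\cdot \dfrac{E_1(k+1,m,n-k-1)}{E_1(k+1,m,0)}$.

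Finally, I would multiply the three pieces together with the overall factor $N!$ and recognize
\[
\binom{mn-\binom{k+1}{2}}{m(n-k-1)} \;=\; \frac{N!}{(m(n-k-1))!\,(m(k+1)-\binom{k+1}{2})!},
\]
which is legitimate since $m(n-k-1) + m(k+1)-\binom{k+1}{2} = mn - \binom{k+1}{2} = N$. This collapses the three factorials into the single binomial coefficient in the theorem, yielding the claimed expression. There is no real obstacle here — everything is already in place; the only thing to watch is that the exponents of $(1-q)$ in the splitting match on the nose, which is precisely the content of the identity $m(n-k-1) + m(k+1)-\binom{k+1}{2} = N$.
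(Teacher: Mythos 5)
Your proposal is correct and follows essentially the same route as the paper: it assembles equation \eqref{rect_limit} from Propositions \ref{sumofentries}, \ref{d-formula} and \ref{limformula}, evaluates the Cauchy factor as $f_{(n-k-1)^m}/(m(n-k-1))!$ via the hook-length formula for the rectangle, and applies the preceding Lemma with $r=k+1$, $p=m$, $s=n-k-1$, with the exponent bookkeeping $m(n-k-1)+m(k+1)-\binom{k+1}{2}=mn-\binom{k+1}{2}$ checked correctly. This is exactly the paper's own argument.
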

\begin{minipage}{0.55\textwidth}
Notice that the product $E_1$ is just the product of the sum of the coordinates $i+j+2s$ of the boxes $(i,j)$ in the diagonal strip of width $r$ above the main diagonal as in the picture below. When $r$ is even the crossed diagonal squares $(i,i)$ for $r\leq i \leq p-r+1$ are not included, and when $r$ is odd they are included in the product as $i+s$. 
\end{minipage}
\begin{minipage}{0.40\textwidth}
\usetikzlibrary{decorations.pathreplacing}
\begin{tikzpicture}[scale=0.5]
\path[fill=gray] (0,6) -- (3,6) -- (3,5) -- (4,5)--(4,4)--(5,4)--(5,3)--(6,3)--(6,0)--
(5,0)--(5,1)--(4,1)--(4,2)--(3,2)--(3,3)--(2,3)--(2,4)--(1,4)--(1,5)--(0,5)--cycle;
\draw[step =1](0,0) grid (6,6);

\draw[decorate,decoration={brace, amplitude=4pt}] (6.1,3) -- (6.1,0);
\draw (6.7,1.5) node{$r$};

\draw (0.5,6.3) node{$s+1$};
\draw (5.5,6.3) node{$s+p$};
\draw (1.5,6.3) node{$\cdot$};
\draw (2.5,6.3) node{$\cdot$};
\draw (3.5,6.3) node{$\cdot$};
\draw (4.5,6.3) node{$\cdot$};
\draw (2.5,3.5) node{$\times$};
\draw (3.5,2.5) node{$\times$};

\draw (-1,5.5) node{$s+1$};
\draw (-1,0.5) node{$s+p$};
\draw (-0.5,1.5) node{$\cdot$};
\draw (-0.5,2.5) node{$\cdot$};
\draw (-0.5,3.5) node{$\cdot$};
\draw (-0.5,4.5) node{$\cdot$};

\end{tikzpicture}
\end{minipage}

\section{Truncation by almost squares}

We consider now rectangles truncated by a square without a corner, a case also considered in \cite{Roich2}. We will use the methods developed so far to derive a formula for the volume generating function $F_D(q)$ for these shapes and the number of standard truncated tableaux.

Specifically, let $D=n^m \setminus (k^{k-1},k-1)$ be a rectangle truncated by square without a corner. For example, when $n=10,m=8,k=4$, we  have
\ytableausetup{boxsize=1.5ex} $$D = \ydiagram{6,6,6,7,10,10,10,10}\; .$$ 
\ytableausetup{boxsize=2.5ex}
By symmetry we can assume that $n<m$. Consider the case when $2k \leq n+1$. 
For any plane partition $T$ of shape $D$, let $p=T[k,n-k+1]$ be the value of the entry in that missing square corner. Because of the row and column inequalities, $T$ is in bijection with $T'$ of straight truncated shape $n^m \setminus \delta_{k-1}$, where the $T'[i,j]=T[i,j]$ for $(i,j) \in D$ and $T'[i,j] = T[k,n-k+1]=p$ for the values in the extra boxes. For example,
$$T=\ytableaushort{6654,6543,53322,4322211,3221111,2111111} \qquad \longleftrightarrow \qquad
T'=\ytableaushort{6654{*(gray)2},6543{*(gray)2}{*(gray)2},53322{*(gray)2}{*(gray)2},4322211,3221111,2111111}.$$
Conversely, if $T'$ is a plane partition of truncated shape $n^m\setminus \delta_{k-1}$ whose $n-k+2$nd diagonal $\mu =(T[1,n-k+1],\ldots,T[k,n])$ is of all equal entries, i.e. $\mu =(p^k)$ for some $p$, this forces all entries $T[i,j]=p$ for $i\geq n-k+1$ and $j\leq k$. 

Then the generating function $F_{n^m\setminus (k^{k-1},k-1)}(q)$ is obtained, using equation \ref{fixedmu}, from
\begin{align}\label{Hnmk}
H_{n,m,k}(z;x;t) = \sum_p \sum_{\lambda} s_{\lambda}(z)s_{\lambda/(p^k)}(x)t^p=\prod \frac{1}{1-z_ix_j}\sum_{p}s_{(p^k)}(z)t^p,
\end{align}
by substituting $x=(q,q^2,\ldots,q^{n-k})$, $z=(1,q,\ldots,q^{m-1})$ as in the case of the rectangle truncated by a staircase $n^m \setminus \delta_{k-1}$. For the value of $t$ in this case,  since $\sum T[i,j] = \sum T'[i,j] -p(\binom{k+1}{2}-1)$ to account for the extra squares in $T'$, we have $t^p = q^{|\mu|(n-k+1) -p(\binom{k+1}{2}-1)} =(q^{k(n-k+1)-\binom{k+1}{2}+1})^p$.
Then 
\begin{align}\label{Falmostsq}
F_{n^m\setminus (k^{k-1},k-1)}(q)=H_{n,m,k}(1,q,\ldots,q^{m-1};q,q^2,\ldots,q^{n-k};q^{k(n-k)-\binom{k+1}{2}+1})=\notag \\
\prod_{i=0;j=1}^{m-1;n-k} \frac{1}{1-q^{i+j}}\sum_{p}s_{(p^k)}(1,q,\ldots,q^{m-1})(q^{k(n-k)-\binom{k+1}{2}+1})^p
\end{align}
The number of standard truncated tableaux of this shape will then be given by $$\lim_{q\rightarrow 1}(1-q)^{nm-k^2+1} F_{n^m\setminus (k^{k-1},k-1)}(q).$$

We will now evaluate the sum over $p$ and the relevant limit.
\begin{lemma}\label{lemma:thirdshape}
Let $f_q(v) = v^{\binom{k}{2}}\prod_{i\leq k<j}(v q^{m-i} -q^{m-j})=\sum a_i(q)v^i$, then
\begin{align}\label{lemma2_q}
\sum_p s_{(p^k)}(1,\ldots,q^{m-1})t^p =
\prod\limits_{m-k\leq i<m;0\leq j<m-k}\frac{1}{(q^i-q^j)}
\sum_{i} a_i(q) \frac{1}{1-q^i t}.
\end{align}
If $t=q^s$, then
$$\lim_{q\rightarrow 1} (1-q)^{mk-k^2+1}\sum_p s_{(p^k)}(1,\ldots,q^{m-1})(q^s)^p=
\prod\limits_{i=m-k;j=0}^{m-1;m-k-1}\frac{1}{(j-i)}
\frac{(\binom{k}{2}+s)!(k(m-k))!}{(mk -\binom{k+1}{2}+s+1)!}
$$
\end{lemma}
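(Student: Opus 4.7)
The plan is to establish the first identity via the Weyl determinantal formula for $s_{(p^k)}$ specialized at a geometric progression, isolate the $p$-dependence, and then take the $q\to 1$ limit by combining a Vandermonde-type blow-up with a Beta-integral evaluation.

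First I would write $s_{(p^k)}(1,q,\ldots,q^{m-1}) = a_{(p^k)+\delta_m}/a_{\delta_m}$ and use the fact that $a_\alpha(q^0,\ldots,q^{m-1})$ is a Vandermonde in the variables $q^{\alpha_j}$, hence equals $\prod_{i<j}(q^{\alpha_j}-q^{\alpha_i})$. Splitting the product over pairs $(i,j)$ into the three ranges $i<j\leq k$, $k<i<j$, and $i\leq k<j$, the first two cancel (up to a factor $q^{p\binom{k}{2}}$) between numerator and denominator, leaving
\begin{equation*}
s_{(p^k)}(1,q,\ldots,q^{m-1}) = q^{p\binom{k}{2}} \prod_{i\leq k<j\leq m} \frac{q^{m-j}-q^{p+m-i}}{q^{m-j}-q^{m-i}}.
\end{equation*}
Substituting $v=q^p$, rewriting $q^{m-j}-q^{p+m-i} = -(vq^{m-i}-q^{m-j})$, and pulling out $v^{\binom{k}{2}}$ yields $s_{(p^k)}(1,q,\ldots,q^{m-1}) = (-1)^{k(m-k)} D^{-1} f_q(q^p)$, where $D=\prod_{i\leq k<j}(q^{m-j}-q^{m-i})$. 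Reindexing by $(m-j,m-i)\mapsto(j,i)$ identifies $(-1)^{k(m-k)}/D$ with $\prod_{m-k\leq i<m,\,0\leq j<m-k}(q^i-q^j)^{-1}$. Expanding $f_q(v)=\sum_i a_i(q)v^i$ and summing the geometric series $\sum_p (tq^i)^p=(1-tq^i)^{-1}$ gives the stated identity.

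For the limit, I would expand each simple factor to leading order in $1-q$. Since $q^i-q^j \sim (j-i)(1-q)$ as $q\to 1$, the prefactor $\prod(q^i-q^j)^{-1}$ contributes exactly $(1-q)^{-k(m-k)}\prod(j-i)^{-1}$, absorbing $k(m-k)=mk-k^2$ of the $(1-q)^{mk-k^2+1}$ prefactor. The single remaining power of $(1-q)$ must come from the sum: for each admissible $i$, $\frac{1-q}{1-q^{i+s}}\to \frac{1}{i+s}$ while $a_i(q)\to a_i(1)$, so the limit reduces to $\prod(j-i)^{-1}\cdot\sum_i a_i(1)/(i+s)$. Since $f_1(v)=v^{\binom{k}{2}}(v-1)^{k(m-k)}$, we have
\begin{equation*}
\sum_i \frac{a_i(1)}{i+s} = \int_0^1 u^{s-1} f_1(u)\,du = (-1)^{k(m-k)}\, B\!\left(\binom{k}{2}+s,\; k(m-k)+1\right),
\end{equation*}
and this Beta function expands to a ratio of factorials using $\binom{k}{2}+k(m-k)=mk-\binom{k+1}{2}$.

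The main obstacle is careful bookkeeping of signs and index shifts. Three sources of $(-1)^{k(m-k)}$ arise --- from reversing $q^{m-j}-q^{p+m-i}$, from $j-i=-(i-j)$ in the Taylor expansion of the Vandermonde factors, and from $(u-1)^{k(m-k)}=(-1)^{k(m-k)}(1-u)^{k(m-k)}$ in the Beta integral --- and they must cancel coherently; similarly the $\pm 1$ shifts inside the Gamma arguments need to be tracked through the identity $\binom{k}{2}+s+k(m-k)+1 = mk-\binom{k+1}{2}+s+1$. Once signs are reconciled, everything else is an elementary manipulation of geometric series and the Beta function.
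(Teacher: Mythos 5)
Your proposal is correct and follows essentially the same route as the paper: the specialized determinantal (Vandermonde) formula to isolate the $p$-dependence into $f_q(q^p)$, geometric-series summation, the limit $\frac{1-q}{1-q^{i+s}}\to\frac{1}{i+s}$, and the identification $\sum_i a_i(1)/(i+s)=\int_0^1 v^{s-1}f_1(v)\,dv$. Your Beta-function evaluation is just a repackaging of the paper's recursive computation of $I(a,b)=\int_0^1 v^a(v-1)^b\,dv$, and the two agree (both yield $(\binom{k}{2}+s-1)!\,(k(m-k))!/(mk-\binom{k+1}{2}+s)!$ up to the sign that cancels against $\prod(j-i)^{-1}$), so the sign bookkeeping you flag does close up.
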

\begin{proof}
Consider the determinantal formula for the Schur functions, we have that
\begin{align}\label{schurdet}
s_{\lambda}(q^{0},\ldots,q^{m-1})\prod_{0\leq j<i<m}(q^i-q^j)= 
 \det [ (q^i)^{\lambda_j+m-j}]_{i,j} =
\det[(q^{\lambda_j+r-j})^i]_{i,j}=\notag\\
\prod_{0<i<j<m+1} (q^{\lambda_i+m-i}-q^{\lambda_j+m-j})
\end{align}
where in our case $\lambda=(p^k,0^{m-k})$.  we have
\begin{align}\label{detformula}
\prod_{0<i<j<m+1} (q^{\lambda_i+m-i}-q^{\lambda_j+m-j}) =\notag \\
\prod_{0<i<j\leq k}(q^p q^{m-i} -q^p q^{m-j})\prod_{0<i\leq k<j<m+1}(q^p q^{m-i} -q^{m-j})\prod_{k<i<j<m+1}(q^{m-i}-q^{m-j})=\notag \\
(q^p)^{\binom{k}{2}}\prod_{0<i<j\leq k}( q^{m-i} - q^{m-j})\prod_{k<i<j<m+1}(q^{m-i}-q^{m-j})
\prod_{0<i\leq k<j<m+1}(q^p q^{m-i} -q^{m-j})
\end{align}
Let $$A(q)=\prod\limits_{0<i<j\leq k}( q^{m-i} - q^{m-j})\prod\limits_{k<i<j<m+1}(q^{m-i}-q^{m-j}),$$ which does not depend on $p$. 
The last equation, \ref{detformula} is thus equal to $A(q)f_q(q^p)$, where $f_q(v)$ is the polynomial defined in the statement of the lemma.
We have
$$\sum_p f_q(q^p)t^p = \sum_{0\leq i \leq l} a_i(q) \sum_p q^{ip}t^p =
\sum_{0 \leq i \leq l} a_i(q) \frac{1}{1-q^i t},$$
which together with $A(q)$, equation \eqref{schurdet} and some cancellations gives the first part of the lemma.
For the limit we need to find
$$\lim_{q\rightarrow 1}(1-q) \sum_{0 \leq i \leq l} a_i(q) \frac{1}{1-q^i q^s}.$$
This limit is equal to
\begin{align*}
\sum_{0 \leq i \leq l} \frac{a_i(1)}{i+s} = \int_{0}^1 f_1(v)v^{s-1}dv=\\
\int_0^1 v^{\binom{k}{2}+s-1}
\prod_{0<i\leq k<j<m+1}(v 1^{m-i} -1^{m-j})dv=
\int_0^1 v^{\binom{k}{2}+s-1}(v-1)^{k(m-k)}dv
\end{align*}
For the last integral, it could easily be found recursively, e.g. if
$$I(a,b) = \int_0^1 v^a(v-1)^bdv, \text{ then } I(a,b) = -\frac{b}{a+1}I(a+1,b-1),$$
where $I(a,0)=\frac{1}{a+1}$, so $\displaystyle I(a,b) = (-1)^b \frac{b!}{(a+1)\cdots (a+b+1)}.$
Noticing that $\displaystyle \lim_{q\rightarrow 1}\prod\limits_{i,j} \frac{1-q}{q^i-q^j} = \prod \frac{1}{i-j}$ for the remaining $mk-k^2$ factors in equation \eqref{lemma2_q} we obtain the desired limit.   
\end{proof}
For the number of standard tableaux of shape $D$ take the limit $\lim\limits_{q\rightarrow 1}(1-q)^{nm-k^2+1}F_D(q)$ using the formula for $F_D(q)$ from \eqref{Falmostsq} and applying the second part of the Lemma with $s=k(n-k)-\binom{k+1}{2}+1$. We can simplify the products by viewing them as products of hook-lengths and invoking the hook-length formula for the respective shape. 

\begin{theorem}
The generating function for truncated tableaux of shape $D=n^m\setminus (k^{k-1},k-1)$, where $n\geq 2k-1$, is
$$F_D(q) =\sum_r  a_r(q) \frac{1}{1-q^{r+kn-k^2-\binom{k+1}{2}+1}}
\prod\limits_{m-k\leq i<m;0\leq j<m-k}\frac{1}{(q^i-q^j)}\prod_{i=0;j=1}^{m-1;n-k} \frac{1}{1-q^{i+j}}.$$
The number of standard truncated tableaux of that shape is
$$\frac{(nm-k^2+1)!f_{(m^{n-k})}}{(m(n-k))!}
\frac{f_{((m-k)^{k})}}{((m-k)k)!}\frac{(kn-k^2)!(k(m-k))!}{(mk +kn-2k^2+1)!}.$$
\end{theorem}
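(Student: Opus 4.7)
My plan is to adapt the machinery of Sections 4--6 and reduce the problem, through a box-filling bijection, to the rectangle-minus-staircase case already analyzed. First, I would observe that every plane partition $T$ of shape $D=n^m\setminus(k^{k-1},k-1)$ can be canonically extended to a plane partition $T'$ of the slightly larger shape $n^m\setminus\delta_{k-1}$ by filling each of the $\binom{k+1}{2}-1$ boxes of $(n^m\setminus\delta_{k-1})\setminus D$ with the common value $p:=T[k,n-k+1]$. Row and column monotonicity of $T$ makes this filling valid and forces the entries of $T'$ along the diagonal $(T'[1,n-k+1],\ldots,T'[k,n])$ to all equal $p$. This establishes a bijection between plane partitions of $D$ and plane partitions of $n^m\setminus\delta_{k-1}$ whose $\mu$-diagonal (in the sense of Proposition~\ref{sumofentries}) is rectangular, $\mu=(p^k)$.

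Second, I would translate the resulting generating function into Schur functions. Applying $\phi$ from Proposition~\ref{sumofentries} (with $k$ replaced by $k-1$) to $T'$, the condition $\mu=(p^k)$ turns $F_D(q)$ into
\[
F_D(q)=\sum_{p\ge 0}\sum_{\lambda}s_\lambda(z)\,s_{\lambda/(p^k)}(x)\,t^p,
\]
with $z=(1,q,\ldots,q^{m-1})$, $x=(q,q^2,\ldots,q^{n-k})$, and $t$ an explicit power of $q$ determined by balancing the shift $|\mu|(n-k+1)$ from Proposition~\ref{sumofentries} against the correction $-p(\binom{k+1}{2}-1)$ for the extra boxes. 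Applying \eqref{fixedmu} collapses the $\lambda$-sum to $\prod(1-z_ix_j)^{-1}s_{(p^k)}(z)$, yielding precisely $H_{n,m,k}(z;x;t)$ of \eqref{Hnmk}. The closed form for $F_D(q)$ stated in the theorem now follows from the first assertion of Lemma~\ref{lemma:thirdshape}.

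Third, to count standard tableaux I would invoke Proposition~\ref{limformula}: it suffices to compute $(nm-k^2+1)!\,\lim_{q\to 1}(1-q)^{nm-k^2+1}F_D(q)$. I would split the exponent as $(nm-k^2+1)=m(n-k)+(mk-k^2+1)$ and treat the two pieces separately. The product $\prod_{i=0,j=1}^{m-1,n-k}(1-q^{i+j})^{-1}$ carries $m(n-k)$ poles at $q=1$ and, via $\lim_{q\to 1}(1-q)/(1-q^{a})=1/a$, collapses to $\prod 1/(i+j)$; the rectangular hook-length formula identifies this with $f_{(m^{n-k})}/(m(n-k))!$. The remaining factor $(1-q)^{mk-k^2+1}\sum_p s_{(p^k)}(z)t^p$ is exactly the limit computed in the second assertion of Lemma~\ref{lemma:thirdshape}. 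Combining the two pieces, and recognizing the prefactor $\prod_{m-k\le i<m,\,0\le j<m-k}1/(j-i)$ (with signs absorbed) as $f_{((m-k)^k)}/(k(m-k))!$ via another application of the rectangular hook-length formula, one collects the remaining factorials into the stated binomial ratio.

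The main obstacle is the careful bookkeeping of exponents: pinning down $s$ in $t=q^s$ requires tracking both the $kp(n-k+1)$ term from $\phi$ and the $-p(\binom{k+1}{2}-1)$ correction from the duplicated entries, and the cancellations between the output of Lemma~\ref{lemma:thirdshape} and the surrounding factorials must be arranged to reproduce the clean binomial-ratio presentation. Once the hook-length identifications are made, the remainder is elementary algebra.
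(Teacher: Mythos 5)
Your proposal is correct and follows essentially the same route as the paper: the same filling of the $\binom{k+1}{2}-1$ missing boxes with $p=T[k,n-k+1]$ to reduce to $n^m\setminus\delta_{k-1}$ with rectangular diagonal $\mu=(p^k)$, the same reduction via \eqref{fixedmu} to $H_{n,m,k}$ and Lemma~\ref{lemma:thirdshape}, and the same limit computation with hook-length identifications. No substantive differences to report.
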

After cancellations and regrouping of the factorials we obtain the binomial coefficient version of the formula in Theorem \ref{thm3} from the Introduction.

\section{3-dimensional Young diagrams and lozenge tilings}

In this section we will mention some connections with 3d diagrams/stepped surfaces/lozenge tilings, which are of interest as models in statistical mechanics. For an introduction to the subject see, e.g. \cite{Kenyon}.

Plane partition $T$ of shape $D$ can be thought as a 3D diagram of unit cubes, see Figure~\ref{fig:3dpp}, where the base in the $xy$ plane is $D$ and on top of each square $[i,j]\in D$ there are $T[i,j]$ unit cubes. Its volume is the number of cubes, i.e. $\sum T[i,j]$, so our generating function $F_D(q)$ is the same as the volume generating function for plane partitions of base $D$:
$$F_D(q) = \sum_{T} q^{Vol(T)}.$$

 We usually draw 3D Young diagrams as their projection on the $x+y+z=0$ plane, where each side of the unit cube becomes a rhombus. The rhombi do not overlap, so  the projection of $T$ becomes a lozenge (pair of equilateral triangles) tiling of the triangular grid with certain boundary.  

\begin{figure}\label{fig:3dpp}
\begin{center} \begin{tabular}{ccc} 
\begin{minipage}{2in} 
\includegraphics[width=1.7in]{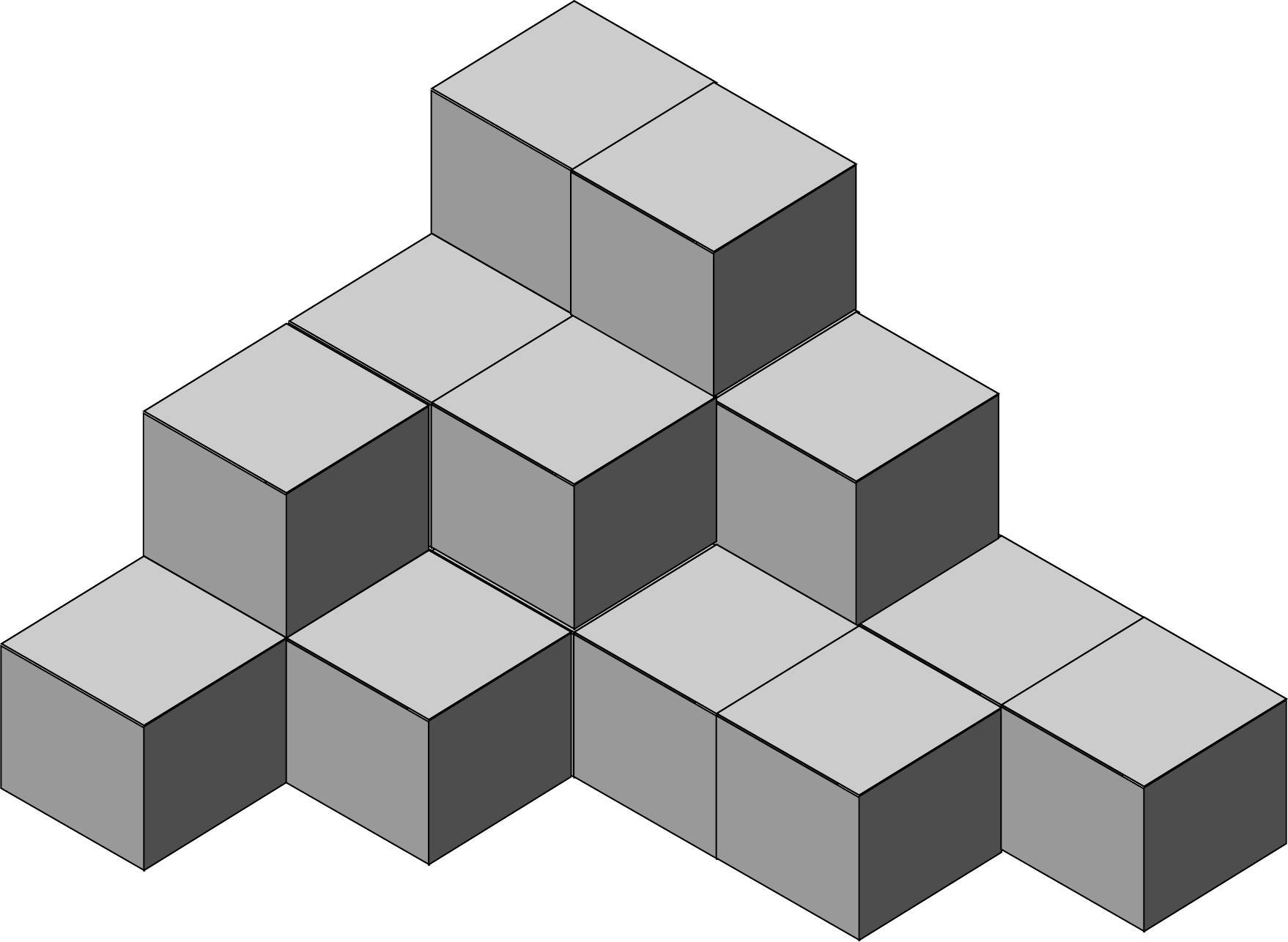}
\end{minipage}&$\quad$&\begin{minipage}{2.5in}\includegraphics[width=2.5in]{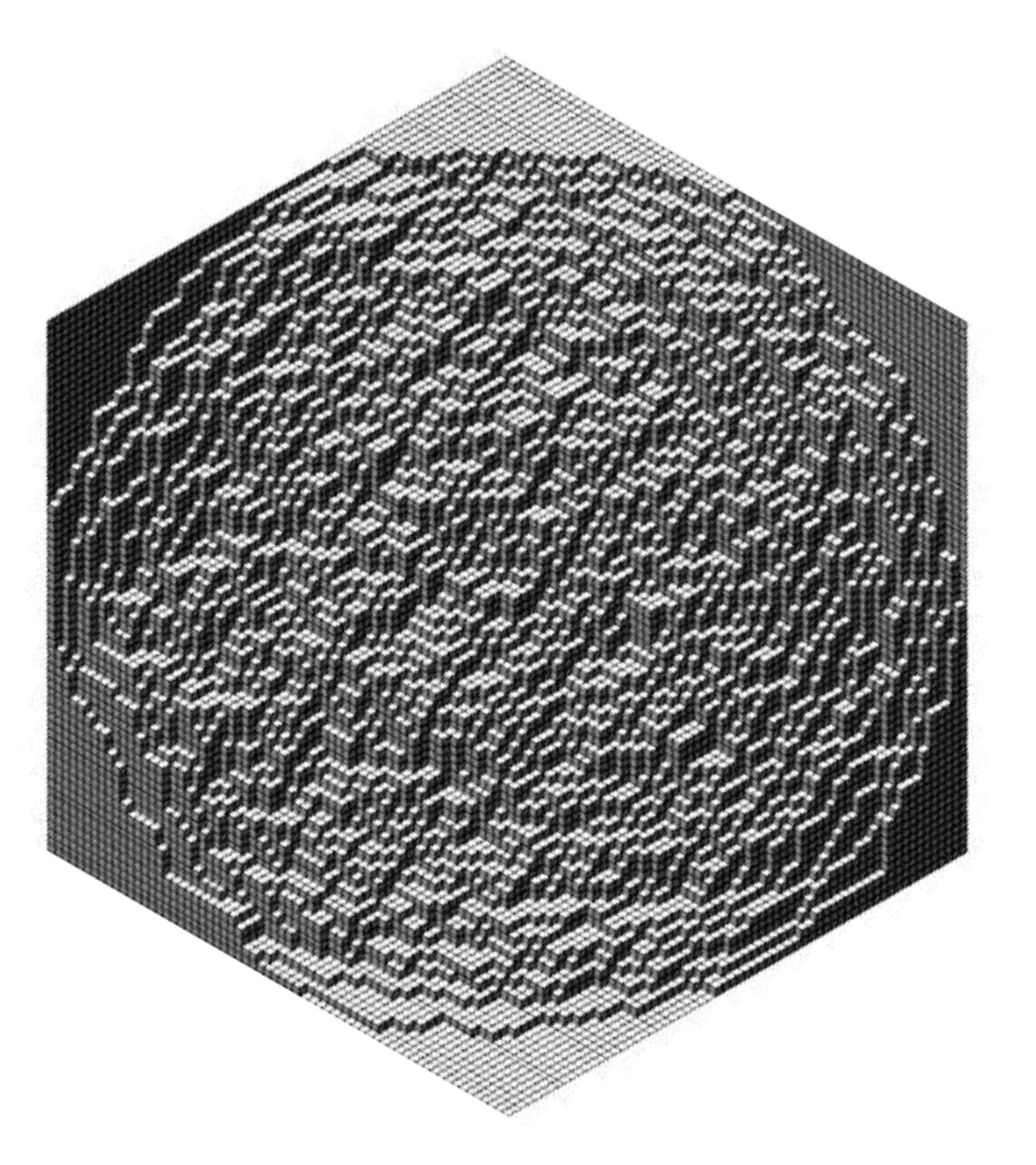} \end{minipage}
\end{tabular}\end{center}

\caption{The picture on the left above show the 3d Young diagram corresponding to $T=\ytableaushort{33211,2211,21,1}$ of shape $(4,3,3,1)$. If $D=n^n$ and $T[1,1]=n$, as $n\rightarrow \infty$, such plane partitions approximate stepped surfaces, as the picture on the right shows. }
\end{figure}

Using the methods developed so far, in particular equation \eqref{fixedmu}, we can easily derive volume generating functions for 3d Young diagrams with certain restrictions. Alternatively, these would correspond to lozenge tilings with free boundary. Namely, truncated plane partitions of shape $n^m \setminus \delta_k$, when drawn in 3 dimensions, are lozenge tilings of a hexagon with sides $m,a,n,m-k,a,n-k$, where $a \rightarrow \infty$ (i.e. height is unbounded). Here $k$ of the lozenges on the right side will be ``sticking out'', that is $k$ of the triangles on the right boundary will be unmatched thus corresponding to a semi-free boundary condition, see Figure~\ref{lozenges}.  

\begin{figure}\label{lozenges}
\begin{center}
\begin{tabular}{p{1.8in}p{1.8in}p{1.8in}}
\includegraphics[width=1.5in]{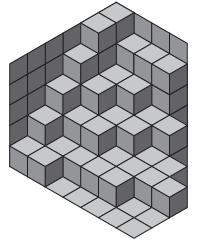} &
\includegraphics[width=1.5in]{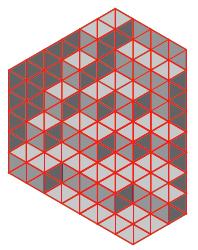}&
\includegraphics[width=1.5in]{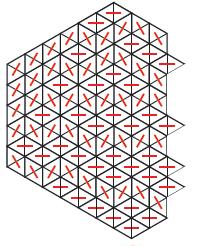} 
\end{tabular}
\end{center}

\caption{ The picture on the left represents the 3d diagram of a truncated plane partition with $n=8$, $m=6$, $a=5$ and $k=3$. The middle picture shows the underlying triangular grid and the picture on the right shows the corresponding matching of the triangles in the lozenges, where each lozenge is the side of cube from the first picture. }
\end{figure}

Using the results we have obtained so far about $F_D(q)$ for various shapes $D$ we can state the following facts about the $q^{vol}$ distribution of the truncated 3d diagrams. Setting $D=n^n$ and $\mu=(b)$ in the formulae for $F_D(q)$ from the section on truncated rectangles we get the following 2 results.

\begin{prop}
The average value under the volume statistic of boxed plane partitions of base $(n^n)$ and whose corner is at a fixed value $b$, i.e. $T[1,n]=b$, is given by
$$\frac{\sum_{T:\sh(T)=n^n; T_{1n}=b} q^{\sum T_{ij} }}{\sum_{T:\sh(T)=n^n}q^{\sum T_{ij}}} = \prod_{i=1}^{n}(1-q^{n-1+i})\left[ n+b-1 \atop b \right]_q.$$
\end{prop}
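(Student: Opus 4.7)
The plan is to specialize the bijection $\phi$ and the Cauchy-type identity \eqref{fixedmu} from Sections~3--4 to the case of an $n\times n$ rectangle with no truncation ($k=0$, $m=n$). In this case the ``diagonal'' of length $k+1=1$ in the image of $\phi$ is the single cell $\mu=(T[1,n])$, so fixing $T[1,n]=b$ is precisely the restriction $\mu=(b)$ in the sum \eqref{double}. By Proposition~\ref{sumofentries}, with the same specialization $x=(q,q^2,\ldots,q^{n-1})$ and $z=(1,q,\ldots,q^{n-1})$ used for truncated rectangles,
\[
F_{n^n,\,T[1,n]=b}(q) \;=\; q^{bn}\sum_{\lambda} s_\lambda(1,q,\ldots,q^{n-1})\,s_{\lambda/(b)}(q,q^2,\ldots,q^{n-1}),
\]
where the $q^{bn}$ prefactor is the contribution of the $|\mu|(n-k)$ term in the sum-of-entries formula of Proposition~\ref{sumofentries}.

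Next I would apply \eqref{fixedmu} with $\mu=(b)$, using $s_{(b)}=h_b$, to collapse the $\lambda$-sum:
\[
\sum_\lambda s_\lambda(z)\,s_{\lambda/(b)}(x) \;=\; h_b(z)\prod_{i,j}\frac{1}{1-z_ix_j}.
\]
Evaluating the product at the specialization gives $\prod_{i=1}^{n}\prod_{j=1}^{n-1}(1-q^{i+j-1})^{-1}$, while the familiar evaluation $h_b(1,q,\ldots,q^{n-1})=\qbinom{n+b-1}{b}$ (the generating function for partitions with at most $b$ parts each $\leq n-1$) supplies the Gaussian binomial. For the denominator, either summing the numerator over all $b$ via the $q$-binomial theorem, or applying the same bijection with $k=0$ and no restriction on $\mu$, recovers MacMahon's box formula $F_{n^n}(q)=\prod_{i,j=1}^n(1-q^{i+j-1})^{-1}$.

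Taking the ratio, the $j=1,\ldots,n-1$ portion of the denominator cancels against the numerator's double product, leaving exactly $\prod_{i=1}^{n}(1-q^{n-1+i})$, which combines with $\qbinom{n+b-1}{b}$ (and the $q^{bn}$ from the volume shift) to give the claimed formula. The only real obstacle is bookkeeping: tracking the $-|\lambda|+|\mu|(n-k)$ offsets introduced by $\phi$ against the specializations $x_j=q^j$, $z_j=q^{j-1}$ so that the $q$-exponents line up cleanly with $\prod_{i=1}^n(1-q^{n-1+i})$. Beyond this exponent-tracking, no new ingredient is required: the result follows directly from the machinery already built for the truncated-rectangle enumeration.
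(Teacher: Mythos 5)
Your proposal is correct and follows essentially the same route as the paper, which derives this proposition exactly by setting $D=n^n$, $k=0$, $\mu=(b)$ in \eqref{fixedmu} with the specializations $z=(1,q,\ldots,q^{n-1})$, $x=(q,\ldots,q^{n-1})$, $t=q^{n}$ and dividing by MacMahon's product $\prod_{i,j=1}^n(1-q^{i+j-1})^{-1}$. The only point worth flagging is that your bookkeeping correctly produces an extra factor $q^{bn}$ from the $|\mu|(n-k)$ shift in Proposition~\ref{sumofentries} (as a check, $n=b=1$ gives ratio $q(1-q)$, not $1-q$), so that factor is missing from the proposition as printed rather than from your argument.
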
 

More generally, the volume generating function for plane partitions on $(n^n)$, whose $n-k$'th diagonal is fixed at a certain value $\mu$, i.e. $(T[1,n-k+1],T[2,n-k+2],\ldots)=\mu$, is derived as
$$\sum s_{\lambda}(x)s_{\lambda/\mu}(y)s_{\mu}(z) =
\prod \frac{1}{1-x_iy_j } s_{\mu}(x)s_{\mu}(z)$$
for $x=(1,\ldots,q^{n-1})$, $y=(q,q^2,\ldots,q^{n-k})$ and $z=(q^{n-k+1},\ldots,q^n)$.
Notice that $\mu$ encodes the positions of the unmatched triangles (``sticking out'' lozenges) in the lozenge tiling interpretation. 
Thus we have that
\begin{multline*}
\sum_{T:\sh(T)=n^n; (T[1,n-k+1],\ldots,T[k,n])=\mu} q^{\sum T[i,j]} =
q^{(n-k+1)|\mu|} \frac{ \prod_{0<i<j<n+1}(q^{\mu_i+n-i} -q^{\mu_j+n-j})}{\prod_{0<i<j<n+1}(q^j-q^i)}\times \\ \frac{ \prod_{0<i<j<k+1}(q^{\mu_i+k-i}-q^{\mu_j+k-j})}{\prod_{0\leq i<j\leq k}(q^j-q^i)}\prod_{0<i<n+1,0<j<n-k+1} \frac{1}{1-q^{i+j-1} }.
\end{multline*}

To obtain the average distribution we can use the determinantal formula for the sum of Schur functions over partitions of restricted length [Ron King, unpublished notes]:
$$\sum_{\lambda, l(\lambda)\leq m} s_{\lambda}(x_1,\ldots,x_n) = \frac{1}{\prod_{i<j}(x_i-x_j)(1-x_ix_j)\prod_i (1-x_i)} \det[ x_i^{n-j}-(-1)^m\chi(j>m)x_i^{n-m+j-1}]_{i,j=1}^n.$$

\ytableausetup{boxsize=0.4em} 
\parbox{0.2\textwidth}{\ydiagram{6,7,8,9,10,11,12,13,13,13,13,13} * [*(blue)]{6+1,7+1,8+1,9+1,10+1,11+1,12+1} }\parbox{0.8\textwidth}{
\begin{prop}
The distribution for truncated plane partitions with base $n^m$ and fixed rightmost diagonal $=\mu$ is
$$\frac{ q^{|\mu|(n-k)}\prod_i(1-q^{n-k+i-1})\prod_{i<j}(q^{\mu_i+m-i}-q^{\mu_j+m-j})(1-q^{2n-2k+i+j-2}) }
{\det[q^{(n-k+i-1)(m-j)}-(-1)^{k+1}\chi(j>k+1)q^{(n-k+i-1)(m-k+j-2)}]_{i,j}}$$
\end{prop}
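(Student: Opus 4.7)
The plan is to derive the numerator and denominator of the claimed distribution separately using the Schur function machinery already set up in the paper, then take their ratio. For the numerator, consider a truncated plane partition of shape $n^m\setminus\delta_k$ with fixed rightmost diagonal $(T[1,n-k],T[2,n-k+1],\ldots,T[k+1,n])=\mu$. Under the bijection $\phi$ of Proposition~\ref{sumofentries}, fixing this diagonal fixes the inner shape $\mu$ of the associated skew tableau, and the weight formula gives
$$F_\mu(q)=q^{(n-k)|\mu|}\sum_\lambda s_\lambda(1,q,\ldots,q^{m-1})\,s_{\lambda/\mu}(q,q^2,\ldots,q^{n-k-1}).$$
Applying identity~\eqref{fixedmu} collapses the $\lambda$-sum into $\prod_{i,j}(1-q^{i+j-1})^{-1}\cdot s_\mu(1,q,\ldots,q^{m-1})$, and Schur's determinantal formula (in the form used in the proof of Lemma~\ref{lemma:thirdshape}) evaluates the remaining Schur function as $\prod_{i<j}(q^{\mu_i+m-i}-q^{\mu_j+m-j})\big/\prod_{i<j}(q^{j-1}-q^{i-1})$.

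For the denominator I sum $F_\mu(q)$ over all $\mu$ with $l(\mu)\leq k+1$. The $\mu$-independent factor $\prod_{i,j}(1-q^{i+j-1})^{-1}$ pulls out, leaving $\sum_\mu q^{(n-k)|\mu|}s_\mu(1,q,\ldots,q^{m-1})$, which by homogeneity of the Schur function equals $\sum_{l(\mu)\leq k+1}s_\mu(y_1,\ldots,y_m)$ with $y_i=q^{n-k+i-1}$. Applying King's determinantal formula (the display immediately preceding the proposition) with length bound $k+1$ and $m$ variables gives
$$\sum_{l(\mu)\leq k+1}s_\mu(y)=\frac{\det\bigl[y_i^{m-j}-(-1)^{k+1}\chi(j>k+1)\,y_i^{m-k+j-2}\bigr]_{i,j=1}^m}{\prod_{i<j}(y_i-y_j)(1-y_iy_j)\prod_i(1-y_i)}.$$
Substituting $y_i=q^{n-k+i-1}$ turns the numerator into exactly the determinant appearing in the proposition, and turns the three products in the denominator into $\prod_{i<j}(q^{n-k+i-1}-q^{n-k+j-1})$, $\prod_{i<j}(1-q^{2n-2k+i+j-2})$, and $\prod_i(1-q^{n-k+i-1})$.

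Forming the ratio $F_\mu(q)/F(q)$, the common prefactor $\prod_{i,j}(1-q^{i+j-1})^{-1}$ cancels. The Schur-formula Vandermonde $\prod_{i<j}(q^{j-1}-q^{i-1})$ combines with the King's-formula Vandermonde $\prod_{i<j}(y_i-y_j)$ into a pure sign times a power of $q$, which can be absorbed into the determinant by row rescaling (and combined with the $q^{(n-k)|\mu|}$ prefactor). What remains is exactly the stated numerator $q^{|\mu|(n-k)}\prod_i(1-q^{n-k+i-1})\prod_{i<j}(q^{\mu_i+m-i}-q^{\mu_j+m-j})(1-q^{2n-2k+i+j-2})$ over the claimed determinant. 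The main bookkeeping obstacle is carefully tracking the signs and $q$-powers from the two Vandermonde manipulations so that everything aligns---especially reconciling the sign $(-1)^{k+1}$ inside the determinant with the sign from the Vandermonde collapse. Once the variable identifications above are fixed, the rest of the proof is a mechanical simplification.
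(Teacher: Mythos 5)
Your proposal is correct and follows essentially the same route the paper (implicitly) takes: the numerator comes from the fixed-diagonal identity \eqref{fixedmu} together with the bialternant evaluation of $s_\mu(1,q,\ldots,q^{m-1})$, and the denominator from King's determinantal formula with length bound $k+1$ in the variables $q^{n-k},\ldots,q^{n-k+m-1}$. The paper itself supplies no more detail than you do, so the only outstanding work is the sign and $q$-power bookkeeping from the two Vandermonde products, which you correctly identify as the remaining mechanical step.
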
} 

We can also easily obtain the $q^{vol}$ distribution of ordinary plane partitions with base $n\times n$ and fixed $n-k$th diagonal $\mu$ as in the picture
$\qquad \ydiagram{13,13,13,13,13,13,13,13,13,13,13,13} * [*(blue)]{6+1,7+1,8+1,9+1,10+1,11+1,12+1}$. 
\begin{prop}The average wrt $q^{Vol}$ distribution of PP of shape $n\times n$ with a fixed $n-k$'th diagonal $(T[1,n-k+1],T[2,n-k+2],\ldots)=\mu$ is
\begin{multline*}
q^{(n-k+1)|\mu|} \prod_{0<i<j<n+1}\frac{ (q^{\mu_i+n-i} -q^{\mu_j+n-j})}{(q^j-q^i)}\times \\ \prod_{0<i<j<k+1} \frac{ (q^{\mu_i+k-i}-q^{\mu_j+k-j})}{(q^j-q^i)}\prod_{0<i<n+1,n-k<j<n+1} \frac{1}{1-q^{i+j-1} }.
\end{multline*}
\end{prop}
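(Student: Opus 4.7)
My plan is to derive the stated formula by following the same Schur-function strategy used in the preceding derivation for boxed plane partitions with a fixed diagonal, but with the variable specializations adjusted to reflect the absence of an upper bound on the entries of $T$.

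First, I would apply the bijection $\phi$ of Proposition~\ref{sumofentries}, extended to the (non-truncated) $n\times n$ shape, to decompose an ordinary plane partition $T$ of base $n\times n$ with fixed $(n-k)$-th diagonal $\mu$ into three pieces: the upper-right triangular region strictly above the $\mu$-diagonal, the skew band between this diagonal and the main diagonal, and the transposed lower-left half. These correspond respectively to a semistandard tableau of shape $\mu$, a skew semistandard tableau of shape $\lambda/\mu$, and a semistandard tableau of shape $\lambda$, summed over partitions $\lambda\supset\mu$ with $l(\lambda)\le n$. Tracking the entries carefully gives the generating function as
$$F_D(q) \;=\; q^{(n-k+1)|\mu|}\sum_{\lambda}s_\lambda(x)\,s_{\lambda/\mu}(y)\,s_\mu(z),$$
with specializations $x = (1,q,\ldots,q^{n-1})$, $y = (q^{n-k+1},\ldots,q^n)$, $z = (1,q,\ldots,q^{k-1})$; the prefactor $q^{(n-k+1)|\mu|}$ records the weight contribution of the fixed diagonal and the relative shift between the variable sets.

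Next, I would invoke identity~\eqref{fixedmu} to collapse the sum over $\lambda$, yielding
$$F_D(q) = q^{(n-k+1)|\mu|}\;s_\mu(x)\,s_\mu(z)\prod_{i,j}\frac{1}{1-x_iy_j}.$$
The product $\prod(1-x_iy_j)^{-1}$ with these specializations produces exactly $\prod_{1\le i\le n,\,n-k<j\le n}(1-q^{i+j-1})^{-1}$, matching the final factor in the claim. To handle the Schur factors, I would apply Schur's determinantal formula at the principal specialization $s_\mu(1,q,\ldots,q^{p-1}) = \det[q^{(i-1)(\mu_j+p-j)}]/\det[q^{(i-1)(p-j)}]$, recognize each determinant as a Vandermonde in the variables $q^{\mu_j+p-j}$ (respectively $q^{p-j}$), and simplify the denominator Vandermonde via the elementary identity $q^{p-i}-q^{p-j}=q^{p-i-j}(q^j-q^i)$ for $i<j$. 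Both Schur values then take the form of the Vandermonde quotients in the claim, with residual powers of $q$ that combine cleanly with the prefactor to leave the stated $q^{(n-k+1)|\mu|}$.

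The main obstacle is Step~1: setting up the bijective decomposition correctly for the non-truncated $n\times n$ shape and carefully tracking the shift factor arising from the fixed diagonal, since the standard $\phi$ is stated for truncated shapes and must be adapted here. Once the Schur-sum representation is secured, the remaining manipulations, though bookkeeping-heavy, are essentially routine applications of the Vandermonde and Cauchy identities already used in the boxed case.
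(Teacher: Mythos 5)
Your overall strategy---read off the diagonals to decompose the plane partition into three pieces contributing $s_\mu$, $s_{\lambda/\mu}$ and $s_\lambda$, collapse the sum over $\lambda$ with \eqref{fixedmu}, then evaluate the two surviving Schur functions by the bialternant formula---is exactly the paper's route (it is carried out in the unnumbered displays earlier in the same section). But your specialization of the skew-band variables is wrong, and the error is not cosmetic. The band between the main diagonal and the fixed $(n-k)$-th diagonal consists of $n-k$ diagonals, so the chain $\mu=\lambda^{(n-k)}\subset\cdots\subset\lambda^{(0)}=\lambda$ has $n-k$ steps and $s_{\lambda/\mu}$ must be evaluated at $n-k$ variables: the correct choice (and the paper's) is $y=(q,q^2,\ldots,q^{n-k})$, with $z=(q^{n-k+1},\ldots,q^n)$ for the upper triangle (equivalently your $z=(1,\ldots,q^{k-1})$ together with the prefactor $q^{(n-k+1)|\mu|}$). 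Your $y=(q^{n-k+1},\ldots,q^n)$ has only $k$ entries, so $s_{\lambda/\mu}(y)$ vanishes whenever $\lambda/\mu$ contains a column of more than $k$ boxes, and the sum no longer ranges over all plane partitions of the square. Consequently the Cauchy factor you obtain, $\prod_{1\le i\le n,\; n-k<j\le n}(1-q^{i+j-1})^{-1}$, is not what the bijection produces; the generating function carries $\prod_{1\le i\le n,\;1\le j\le n-k}(1-q^{i+j-1})^{-1}$, exactly as in the paper's displayed formula for $\sum_{T}q^{\sum T[i,j]}$ with fixed diagonal $\mu$.

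The range $n-k<j\le n$ should enter only at the normalization step, which your write-up omits entirely: the proposition asserts an \emph{average} (a probability) with respect to $q^{Vol}$, i.e.\ the generating function above divided by MacMahon's $\prod_{i,j=1}^{n}(1-q^{i+j-1})^{-1}$ for the full square. That division cancels the $j\le n-k$ part and leaves $\prod_{1\le i\le n,\;n-k<j\le n}(1-q^{i+j-1})$---note, \emph{not} its reciprocal; the reciprocal in the printed statement appears to be a typo, as a direct check at $n=2$, $k=1$, $\mu=(b)$ confirms (the normalized weights then sum to $1$ over $b$). In short: reverse-engineering the specialization from the printed final factor led you to a weight assignment that does not match the objects being counted. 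Fix $y$ to $(q,\ldots,q^{n-k})$, keep your steps 2--3, and add the normalization by the MacMahon product; the rest of your outline then goes through as in the paper.
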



\arxiv{ \begin{bibdiv}
\begin{biblist}

\bib{Roich}{article}{
 author = {Adin, R.},
 author = {Roichman, Y},
 title = {Triangle-Free Triangulations, Hyperplane Arrangements and Shifted Tableaux},
 journal={},
 review = {arXiv:1009.2628v1},
}

\bib{Roich2}{article}{
author = {Adin, R.},
author = {King, R.},
author= {Roichman, Y.},
title={Enumeration of standard Young tableaux of certain truncated shapes},
journal={Electron. J. Combin.},
   volume={18},
   date={2011},
}

\bib{Kenyon}{article}{
author={Kenyon, Richard},
title={Lectures on dimers},
journal={Statistical mechanics, IAS/Park City Math. Ser.}, 
volume={16},
publisher={ Amer. Math. Soc.},
place={ Providence, RI},
year={2009},
}

\bib{Mac}{book}{
   author={Macdonald, I. G.},
   title={Symmetric functions and Hall polynomials},
   publisher={The Clarendon Press Oxford University Press},
   place={New York},
   date={1995},
}
\bib{EC2}{book}{
   author={Stanley, Richard P.},
   title={Enumerative combinatorics. Vol. 2},
   publisher={Cambridge University Press},
   place={Cambridge},
   date={1999},
}

\end{biblist}
\end{bibdiv}  }

\adv{

}

\end{document}